\numberwithin{equation}{section}
\theoremstyle{plain}
\newtheorem{thm}{Theorem}[section]
\newtheorem{cor}[thm]{Corollary}
\newtheorem{lem}[thm]{Lemma}
\newtheorem{dfn}[thm]{Definition}
\theoremstyle{definition}
\newtheorem{rmk}[thm]{Remark}
\newtheorem*{rmk*}{Remark}
\newcommand{\beq}{\begin{equation}}
\newcommand{\eeq}{\end{equation}}
\DeclareSymbolFont{extraup}{U}{zavm}{m}{n}
\DeclareMathSymbol{\varheart}{\mathalpha}{extraup}{86}
\DeclareMathSymbol{\vardiamond}{\mathalpha}{extraup}{87}
\newcommand{\vep}{\varepsilon}
\renewcommand{\setminus}{\backslash}
\newcommand{\eqn}[1]{\begin{equation}#1\end{equation}}
\newcommand{\eqan}[1]{\begin{align}#1\end{align}}
\newcommand{\sss}{\scriptscriptstyle}
\newcommand{\nn}{\nonumber}
\newcommand{\erdos}{Erd\H{o}s--R\'enyi random graph}
\newcommand{\e}{{\mathrm e}}
\newcommand{\DT}{{\rm DT}}
\newcommand {\convpg}{\stackrel{\sss {\mathbb P}_g}{\longrightarrow}}
\newcommand {\convpbeta}{\stackrel{\sss {\mathbb P}_{\beta}}{\longrightarrow}}
\newcommand{\oneindic}{\mathbbm{1}}
\newcommand{\indic}[1]{\oneindic_{\{#1\}}}
\newcommand{\out}[1]{}
\begin{document}

\begin{frontmatter}

\title{Tame sparse exponential random graphs}

\runtitle{Tame sparse exponential random graphs}

%%%%%%%%%%%%%%%%%%%%%%%%%%%%%%%%%%%%%%%%%%%%
\begin{aug}

\author[A]{\inits{S}\fnms{Suman}~\snm{Chakraborty}\ead[label=e1]{contact@sumanc.com}}
\author[B]{\inits{R}\fnms{Remco}~\snm{van der Hofstad}\ead[label=e2]{rhofstad@win.tue.nl}}
\author[C]{\inits{F}\fnms{Frank}~\snm{den Hollander}\ead[label=e3]{denholla@math.leidenuniv.nl}}

\address[A]{Pijnacker, Netherlands\printead[presep={,\ }]{e1}}
\address[B]{Department of Mathematics and Computer Science, Eindhoven University of Technology, Eindhoven, Netherlands\printead[presep={,\ }]{e2}}
\address[C]{Mathematical Institute, Leiden University, Leiden, Netherlands\printead[presep={,\ }]{e3}}

\end{aug}

%%%%%%%%%%%%%%%%%%%%%%%%%%%%%%%%%%%%%%%%%%%%%

\begin{abstract}
In this paper we obtain a precise estimate of the probability that the sparse binomial random graph contains a large number of vertices in a triangle. We compute the logarithm of this probability up to second order, which enables us to propose an exponential random graph model based on the number of vertices in a triangle. Specifically, by tuning a single parameter, we can with high probability induce any given fraction of vertices in a triangle. Moreover, in the proposed exponential random graph model we derive a large deviation principle for the number of edges. As a byproduct, we propose a consistent estimator of the tuning parameter.
\end{abstract}

\begin{keyword}
\kwd{Consistent estimation}
\kwd{exponential random graph}
\kwd{nonlinear large deviations}
\kwd{random graphs}
\end{keyword}

\end{frontmatter}

%%%%%%%%%%%%%%%%%%%%%%%%%%%%%%%%%%%%%%%%%%%%%%

%%%%%%%%%% SECTION 1 %%%%%%%%%%%%%%%%%%%%%%%%

\section{Introduction}

%%%

\subsection{Background} 

Many real-world networks are sparse, while at the same time exhibiting transitivity (also called clustering), in the sense that two neighbours of the same vertex are more likely to also be neighbours of one another (see e.g., \out{Newman}\cite{newman2009random}, \out{Rapoport}\cite{rapoport1948cycle}, \out{Serrano and Bogu{\~n}a}\cite{serrano2006clustering}, \out{Watts and Strogatz}\cite{watts1998collective}). As a result, many vertices in these graphs lie in triangles.
  
Exponential random graphs models (ERGM) are popular for modelling sparse real-world networks. Let $\mathcal{G}_n$ be the space of all simple graphs on the vertex set $[n]$, which has $2^{n\choose 2}$ elements. An ERGM can be represented by its law
	\begin{equation}
	\label{ERGM-def}
	\mathbb{P}_{\sss T}(G)=\frac{1}{Z_n(\beta)}\exp{\left(\beta T(G)\right)},\quad G\in \mathcal{G}_n,
	\end{equation}	   
where $T(G)$ is a real-valued function on the space of graphs, $\beta$ is an appropriately chosen parameter (called the inverse temperature in statistical physics), and $Z_n(\beta)$ is the normalisation constant (called the partition function). In \eqref{ERGM-def}, the random variable $T(G)$ is a sufficient statistic, in the sense that $G$ conditionally on $T(G)=t$ is {\em uniform} over all graphs $G$ with $T(G)=t$. Examples of sufficient statistics $T$ include linear combinations of subgraph counts, such as the number of edges, triangles, cycles, etc. ERGMs were first studied in \out{Holland and Leinhardt}\cite{holland1981exponential}, \out{Frank and Strauss}\cite{frank1986markov}. Several new sufficient statistics were introduced in \out{Snijders, Pattison, Robins and Handcock}\cite{snijders2006new}. 

The evaluation of the partition function $Z_n(\beta)$ is a fundamental (and often difficult) problem, and is closely related to an appropriate scaling of the inverse temperature $\beta$. In the dense regime, the first such result was obtained by \out{Chatterjee and Diaconis}\cite{chatterjee2013estimating}. While ERGMs are well-understood in the dense regime, there are hardly any results in the sparse regime (a recent result in the sparse regime appeared in \out{Mukherjee}\cite{mukherjee2020degeneracy}, and a related model called the random triangle model was studied in \out{Jonasson}\cite{jonasson1999random} and \out{H{\"a}ggstr{\"o}m and Jonasson}\cite{haggstrom1999phase}). Unfortunately, even dense exponential random graphs are problematic, as shown in \out{Bhamidi, Bresler and Sly}\cite{BhaBreSly11}: either they locally look like dense Erd\H{o}s-R\'{e}nyi random graphs, or the mixing times of Glauber dynamics or Metropolis-Hasting dynamics on them are exponentially large. Large-deviation-type estimates are the key to studying ERGMs. In dense regimes, this connection has been investigated in \out{Chatterjee and Diaconis}\cite{chatterjee2013estimating}, \out{Chatterjee and Dembo}\cite{chatterjee2016nonlinear}, \out{Bhamidi, Chakraborty, Cranmer and Desmarais}\cite{bhamidi2018weighted}.

%%%

\subsection{Goal and innovation of the present paper} 

In the present paper we continue our work in \cite{ChaHofHol21}, where we investigated sparse ERGMs based on the number of vertices $V_{T}(G)$ that participate in triangles, and we computed the correct order of scaling of $\beta$, which turns out to be of the order $\log n$. Yet, we arrived at the disappointing conclusion that either there are very few vertices in triangles in the graph (when $\beta =a \log n$ with $a<\tfrac{1}{3}$), or virtually all vertices participate in triangles (when $\beta = a\log n$ with $a>\tfrac{1}{3}$). Both are unrealistic from a practical perspective. In this paper we consider the {\em critical case}, which corresponds to $\beta=\tfrac{1}{3} \log{n} + \theta$ for arbitrary $\theta\in {\mathbb R}$, and show that now $V_{T}(G)$ scales in a non-trivial manner, leading to sparse ERGMs with a tuneable fraction of vertices in triangles. We do this via a second-order non-linear large deviation analysis of $V_{T}(G)$ in the sparse \erdos, which is novel. We further prove a large deviation principle for the number of edges, showing that the model indeed is sparse. We propose several related ERGMs based on the number of vertices in triangles, and show how to consistently estimate the parameters in such models. We reiterate that there have been numerous studies to come up with sparse ERGMs \cite{frank1986markov, handcock2003assessing, holland1981exponential, hunter2006inference, snijders2006new} with a large number of triangles but as far as we know this is the first instance where an  ERGM is rigorously shown to simultaneously satisfy the following three desirable properties: (1) With high probability, a typical outcome is sparse, i.e., the number of edges is linear in the number of vertices; (2) the number of triangles in a typical outcome is linear in the number of vertices  (and a linear number of vertices are part of some triangle); (3) consistent estimation of the parameter(s) is possible with the estimators being based on the sufficient statistic(s), the estimators are explicit, the estimation is fast, so that there is no need to resort to simulations (which is often time consuming).

%%%

\subsection{Organisation}
 
This paper is organised as follows. In Section \ref{sec-many-vertices-triangles}, we estimate the second-order of the large-deviation probabilities of the rare event that a sparse \erdos{} has a linear number of vertices in triangles, study the structure of the graph conditionally on this rare event, and provide proofs for our main results. In Section \ref{sec-ERGs}, we use these results, as well as the key insights developed in their proofs, to study exponential random graphs based on the number of vertices in triangles. We show that, for appropriate parameter choices, such models are {\em sparse}, i.e., lead to sparse exponential random graphs. In Section \ref{sec-consistent-estimation-ERGs}, we show how our main results can be used to {\em consistently} estimate the exponential random graph parameters. We close in Section \ref{sec-disc} with a discussion and a list of open problems.

%%%%%%%%%%%%%%%%% SECTION 2 %%%%%%%%%%%%%%%%%%%%%%%

\section{Large number of vertices in triangles}
\label{sec-many-vertices-triangles}

Let $G=(V(G),E(G))$ be a graph with vertex set $V(G)$ and edge set $E(G)$. Let $V_{T}(G)$ be the number of vertices that are part of a triangle in $G$. Throughout this paper, we write $G_n$ for the \erdos{} $G(n,p_n)$, where $p_n=\lambda/n$. In this section, we study the probability that $V_{T}(G_n)$ is of order $n$, as well as the structure of the graph conditionally on this event. 

Our main results, Theorems \ref{thm:ldp_vertices_in_triangle}--\ref{thm-conc-edges}, are stated in Section~\ref{sec:thms}. The upper bound of Theorem \ref{thm:ldp_vertices_in_triangle} is proved in Section~\ref{sec:ub}, the lower bound in Section~\ref{sec:lb}. In Section~\ref{sec:trianglesdisjoint} we use the technique in the proof of Theorem \ref{thm:ldp_vertices_in_triangle} to show that, conditionally on the graph having many vertices in triangles, most of the triangles are disjoint, as stated in Theorem \ref{structure:disjoint triangles}. In Section~\ref{sec:sparse} the graph is shown to be sparse, as stated in Theorem \ref{thm-conc-edges}. 

%%%

\subsection{Main theorems}
\label{sec:thms}

Our first main result is an upper tail estimate for the sparse binomial random graph:

\begin{thm}[Large deviations for number of vertices in triangles]
\label{thm:ldp_vertices_in_triangle}
Let $G_n$ be the \erdos{} with $p_n=\lambda/n$ and $\lambda \in (0,\infty)$. Then
	\begin{equation}
	\label{eqn:monfeb21426pm}
	\mathbb{P}(V_{T}(G_n)  \geq q) 
	= \frac{n(n-1) \times\cdots\times (n-q+1)} {(3!)^{q/3}(q/3)!}\, p_n^q\, \e^{o(n^{19/20})}.
	\end{equation}
In particular, for $a \in (0,1)$, 
	\begin{equation}
	\label{eqn:monfeb21427pm}
	\begin{aligned}
	&\log{\mathbb{P}(V_{T}(G_n)  \geq an)}\\ 
	&\qquad = -n(1-a) \log(1-a) - (\tfrac13 an) \log(\tfrac13 an) - an (\tfrac23  +\tfrac13 \log{6})
	+ an \log{\lambda} + o(n^{19/20}).
	\end{aligned}
	\end{equation}
\end{thm}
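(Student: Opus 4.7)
\emph{Strategy.} Both bounds pivot on the random variable $M$ counting unordered collections of $q/3$ vertex-disjoint triangles contained in $G_n$, for which linearity of expectation immediately yields
\[
\mathbb{E}[M]=\frac{n(n-1)\cdots(n-q+1)}{(3!)^{q/3}(q/3)!}\,p_n^{q},
\]
which is exactly the main factor on the right-hand side of \eqref{eqn:monfeb21426pm}. The task thus reduces to showing that $\mathbb{P}(V_{T}(G_n)\geq q)$ agrees with $\mathbb{E}[M]$ up to a multiplicative factor $e^{o(n^{19/20})}$.

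\emph{Lower bound.} Since $\{M\geq 1\}\subseteq\{V_{T}(G_n)\geq q\}$, by Paley--Zygmund it suffices to show $\mathbb{E}[M^{2}]\leq\mathbb{E}[M]\,e^{o(n^{19/20})}$. The plan is to write
\[
\mathbb{E}[M^{2}]=\sum_{C,C'}p_n^{\,|E(C)\cup E(C')|}
\]
and stratify pairs of packings $(C,C')$ by their overlap pattern (number of whole triangles in common plus edges shared across non-matching triangles). The diagonal $C=C'$ contributes exactly $\mathbb{E}[M]$; the fully vertex-disjoint stratum contributes at most $\mathbb{E}[M]^{2}$, which is far smaller than $\mathbb{E}[M]$ because $\mathbb{E}[M]$ decays like $e^{-\Theta(n\log n)}$; and each intermediate stratum is handled by routine counting, contributing at most $\mathbb{E}[M]$ times a polynomial-in-$n$ factor. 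The resulting logarithmic error sits comfortably inside $o(n^{19/20})$.

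\emph{Upper bound.} For any graph $G$ let $H^{\star}(G)$ be its \emph{triangle core}, the subgraph consisting of all edges of $G$ that lie in at least one triangle of $G$; crucially $V(H^{\star})=V_{T}(G)$. The events $\{H^{\star}(G_n)=H\}$ partition the sample space, so
\[
\mathbb{P}(V_{T}(G_n)\geq q)=\sum_{H:\,v(H)\geq q}\mathbb{P}(H^{\star}(G_n)=H)\leq\sum_{v\geq q}\sum_{e\geq v}\binom{n}{v}\widetilde{N}(v,e)\,p_n^{\,e},
\]
where $\widetilde{N}(v,e)$ counts labelled graphs on $v$ vertices with $e$ edges in which every edge (equivalently, every vertex) lies in a triangle. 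A double-counting of vertex-triangle incidences gives $e\geq v$, with equality iff the graph is a disjoint union of triangles, so the stratum $(v,e)=(q,q)$ reproduces $\mathbb{E}[M]$ exactly. For the other strata $(v,e)=(q+r,q+r+s)$ with $(r,s)\neq(0,0)$, the plan is to bound $\widetilde{N}$ by inductively building cores one triangle at a time and then combine with the factor $p_n^{\,s}=(\lambda/n)^{s}$ per excess edge to absorb the remainder.

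\emph{Main obstacle.} The hard step lies in controlling strata with $(r,s)$ moderately large, where the combinatorial growth of non-disjoint triangle-core structures competes with the probability savings $(\lambda/n)^{s}$ per excess edge. I expect the exponent $19/20$ to emerge from a truncation at a threshold $s\lesssim n^{\alpha}$ for a suitably chosen $\alpha$: below the threshold, the sum over $(r,s)$ is estimated by direct enumeration and stays within a factor $e^{o(n^{19/20})}$ of the main term, while above the threshold the accumulated smallness $(\lambda/n)^{s}$ is enough to overwhelm any combinatorial inflation and render the tail super-polynomially negligible.
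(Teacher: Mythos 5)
Your two halves diverge from the paper's proof in interesting but unequal ways, and the upper bound is where the real gap sits.

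For the \emph{lower bound}, a second--moment/Paley--Zygmund argument on the number $M$ of vertex-disjoint triangle packings is a genuinely different route from what the paper does. The paper simply plants the triangles $123,\dots,s_{n-2}s_{n-1}s_n$, factors out $p_n^{s_n}$, and then uses Harris's inequality (the events ``$\Delta$ absent'' are all decreasing) to lower bound the conditional probability that the rest of the graph is triangle-free by a constant $c>0$; no second moment is needed. Your plan can in principle work, but the overlap stratification you wave at is not ``routine'': pairs of packings can share edges without sharing whole triangles, and making $\mathbb{E}[M^2]\leq \mathbb{E}[M]\,e^{o(n^{19/20})}$ rigorous requires a careful count of these partial overlaps. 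The planting-plus-FKG argument sidesteps all of this, and in fact gives a constant rather than an $e^{-o(n^{19/20})}$ loss.

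For the \emph{upper bound}, the step you defer is precisely the heart of the matter, and I do not think the triangle-core framework as stated will close cleanly. You sum $\binom{n}{v}\widetilde{N}(v,e)\,p_n^e$ over \emph{all} labelled graphs on $v\geq q$ vertices in which every edge lies in a triangle, but such graphs can have anywhere between $v$ and $\binom{v}{2}$ edges, and you give no bound on $\widetilde{N}(v,e)$ beyond ``build cores one triangle at a time.'' The paper does not sum over triangle cores: it sums over \emph{$q$-basic} subgraphs (edge-minimal graphs with $V_T=q$), for which the structure Lemma (Lemma~\ref{lem:fridec20454pm}) yields a partition $(V_1,V_2,V_{31},V_{32})$ with edge count exactly $\ell_1 + \tfrac{3}{2}\ell_2 + 3\ell_{31} + 2\ell_{32}\leq 3q$. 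That automatic linear cap on the number of edges, together with the explicit configuration count \eqref{e:form1} and the Lagrange-multiplier optimisation in Lemma~\ref{lem:thudec1402pm}, is what controls the sum and produces the $o(n^{19/20})$ error. Your plan lacks the analogue of that decomposition: without it, controlling $\widetilde{N}(v,e)$ for $e-v$ ranging up to $\Theta(v^2)$ is an open combinatorial problem in your write-up, and your conjectured threshold $s\lesssim n^{\alpha}$ is speculation rather than a proof step. (Also, a minor slip: ``every edge lies in a triangle'' and ``every vertex lies in a triangle'' are not equivalent in general; the former implies the latter for non-isolated vertices, but not conversely.)
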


\noindent
Theorem \ref{thm:ldp_vertices_in_triangle} extends \cite[Theorem 1.8]{ChaHofHol21} from first- to second-order large deviations. The second-order term will be important to settle a number of open problems, including the computation of the number of edges in the graph conditionally on the graph having many vertices in triangles, as well as to prove that most of the triangles are actually disjoint. 

Our second main result says that if $G_n$ is conditioned to have of the order $n$ vertices in a triangle, then almost all the triangles are vertex disjoint. To state the result, we let $\DT_n(G_n)$ be the maximum number of vertex-disjoint triangles in the graph $G_n$:

\begin{thm}[Most triangles are disjoint conditionally on $V_T(G_n)=an$]
\label{structure:disjoint triangles}
For any $\varepsilon>0$,
	\begin{equation}
	\label{eqn:thumar301158am}
	\mathbb{P}\big(\DT_n(G_n)\leq \tfrac13(a-\varepsilon)n \,|\, V_T(G_n)\geq an\big) 
	\leq  \exp\left(-\tfrac16\varepsilon n\log{n} + Cn\right),
	\end{equation} 
where $C$ is a constant that is independent of $\varepsilon$.
\end{thm}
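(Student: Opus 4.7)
The plan is to bound $\mathbb{P}(V_T(G_n)=an,\,\DT_n(G_n)\leq \tfrac{1}{3}(a-\varepsilon)n)$ by enumerating the witnessing structure inside $G_n$, and then compare it with the matching lower bound for $\mathbb{P}(V_T(G_n)=an)$ coming from the disjoint-triangle construction that underlies Theorem \ref{thm:ldp_vertices_in_triangle}. On the event in question, fix any maximum vertex-disjoint triangle packing $\mathcal{T}^\star$ in $G_n$ of size $k$, set $U=V(\mathcal{T}^\star)$ with $|U|=3k\leq (a-\varepsilon)n$, and let $R$ be the set of vertices lying in some triangle of $G_n$ but not in $U$, so that $|R|=an-3k\geq \varepsilon n$. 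By the maximality of $\mathcal{T}^\star$, every triangle in $G_n$ meets $U$, so each $v\in R$ lies in a witnessing triangle containing at least one vertex of $U$. The witnessing triangle has either one or two vertices in $U$; the combinatorially cheapest way to cover $R$ is to \emph{pair} up its vertices into $|R|/2$ attached triangles of the first kind, each pair $\{v,v'\}\subseteq R$ glued to a single $u\in U$, contributing $2$ new vertices and $3$ new edges per pair.

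Enumerating this witnessing structure --- choose $\mathcal{T}^\star$ in at most $(n)_{3k}/(6^k k!)$ ways, the set $R$ together with its pairing in at most $n^{|R|}/(2^{|R|/2}(|R|/2)!)$ ways (using $\binom{n}{|R|}\cdot (|R|-1)!!\leq n^{|R|}/(2^{|R|/2}(|R|/2)!)$), and the $U$-vertex assigned to each pair in $k^{|R|/2}$ ways --- and noting that all $3k+3|R|/2$ required edges are distinct and hence jointly present with probability $p_n^{3k+3|R|/2}$, one arrives at
\begin{equation*}
\mathbb{P}(V_T(G_n)=an,\,\DT_n(G_n)=k)\;\leq\; \frac{(n)_{3k}\,p_n^{3k}}{6^k\,k!}\cdot \frac{n^{|R|}\,k^{|R|/2}\,p_n^{3|R|/2}}{2^{|R|/2}\,(|R|/2)!}\cdot e^{Cn}.
\end{equation*}
Writing $k=\tfrac{1}{3}(a-\delta)n$ (so $|R|=\delta n$ with $\delta\geq \varepsilon$), applying Stirling's formula, and using $p_n=\lambda/n$, a short book-keeping shows that the $\log n$-coefficient of the right-hand side equals $-k-|R|/2=-\tfrac{1}{3}an-\tfrac{1}{6}\delta n$. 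Since the disjoint-triangle construction underlying the lower-bound direction of Theorem \ref{thm:ldp_vertices_in_triangle} gives $\mathbb{P}(V_T(G_n)=an)\geq \exp(-\tfrac{1}{3}an\log n-Cn)$, dividing the two bounds and minimising over $\delta\geq \varepsilon$ produces the required conditional estimate $\exp(-\tfrac{1}{6}\varepsilon n\log n+Cn)$. Note the combinatorial origin of the constant $\tfrac{1}{6}=\tfrac{1}{2}-\tfrac{1}{3}$: replacing an indistinguishable triangle (symmetry factor $\tfrac{1}{3}$ per vertex) by an indistinguishable attached pair (symmetry factor $\tfrac{1}{2}$ per vertex) costs exactly $\tfrac{1}{6}\log n$ per displaced vertex.

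The main obstacle is showing that the pairing configuration indeed dominates all competing witnessing structures. Heuristically, each sub-structure contributes to the $\log n$-coefficient an amount proportional to (new edges)$-$(new vertices), so one seeks the attached configuration maximising new vertices per new edge: pairing achieves the ratio $2/3$, whereas an attached triangle of the form $\{u_1,u_2,v\}$ with $u_1,u_2\in U$ achieves at best $1/2$, and configurations in which several witnessing triangles overlap (either at a $U$-vertex, forming a bowtie, or at an $R$-vertex) are strictly worse. A rigorous justification proceeds by a finite classification of attached sub-structures and a convex-combination argument bounding each by the pairing case. A secondary, more routine issue is the mild edge-dependence between distinct witnessing triangles --- two pairs may share a $U$-vertex --- which can be controlled either by conditioning on the edges not incident to $R$ and checking that the $|R|$ ``$v$ lies in an attached triangle'' events then use disjoint edge sets, or by a BK-style inequality in the spirit of the upper-bound technique developed in Section~\ref{sec:ub}.
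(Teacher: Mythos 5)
The approach you take is essentially the same as the paper's: enumerate witnessing structures for the event $\{V_T(G_n)=an,\ \DT_n(G_n)=k\}$, observe that the $\log n$-coefficient of the first-moment bound is $-k-|R|/2$, and divide by the $\exp(-\tfrac13 an\log n + O(n))$ lower bound for the denominator. Your arithmetic is correct, and the identification of $\tfrac16 = \tfrac12 - \tfrac13$ as the per-vertex cost of displacing a triangle vertex into an attached pair is exactly the paper's picture (it is the gap between the $V_1$-rate $\tfrac13$ and the $V_2$-rate $\tfrac12$ in the paper's exponent). However, as you yourself flag, your argument as written only enumerates the single ``all-pairing'' witnessing structure and does not show that this dominates all other attachments. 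This is not a cosmetic omission: an upper bound requires a union bound over \emph{every} way a vertex of $R$ can be absorbed into some triangle, and you have supplied the count only for the favourable one. The paper supplies precisely the missing piece via the $q$-basic graph decomposition (Lemma~\ref{lem:fridec20454pm}): every $q$-basic subgraph of $K_n$ admits a partition into $(V_1,V_2,V_{31},V_{32})$, the count of $q$-basic subgraphs with a given $(\ell_1,\ell_2,\ell_{31},\ell_{32})$ profile is bounded explicitly in \eqref{e:form1}--\eqref{eqn:815am01sep22}, and the $\log n$-coefficient $q-\tfrac13\ell_1-\tfrac12\ell_2-\ell_3$ (after inserting $p_n^q$, this becomes $-\tfrac13\ell_1-\tfrac12\ell_2-\ell_3$) is then maximised under the constraint $\ell_1\leq(a-\varepsilon)n$. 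The maximiser indeed sits at $\ell_2=q-\ell_1$, $\ell_3=0$, which is your pairing case, but one has to \emph{derive} this from the classification rather than assume it.

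Your secondary worry about edge-dependence and a BK-type inequality is not actually needed and signals a slight misreading of what the estimate requires. The paper works with the first-moment bound $\mathbb{P}(G_n\text{ contains a }q\text{-basic }(\ell_1,\ell_2,\ell_{31},\ell_{32})\text{ subgraph})\leq p_n^{\#\text{edges}}\cdot\#\{\text{such subgraphs}\}$, which is a union bound over \emph{fixed labelled structures}: once the structure is fixed, the event that all its edges appear is a single increasing event with a clean probability $p_n^{\ell_1+\frac32\ell_2+3\ell_{31}+2\ell_{32}}$, and there is no need to disentangle overlapping ``$v$ lies in a triangle'' events. (Harris/FKG is used in the paper only for the \emph{lower} bound, Section~\ref{sec:lb}, which is a different matter.) In short: replace your ``finite classification'' placeholder with Lemma~\ref{lem:fridec20454pm} and the count \eqref{e:form1}, drop the BK machinery, and the argument matches the paper's Section~\ref{sec:trianglesdisjoint} proof.
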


Our third main result is the following concentration property on the number of edges in the graph:

\begin{thm}[Concentration of edges conditionally on $V_T(G_n)\geq an$]
\label{thm-conc-edges}
Conditionally on $V_T(G_n)\geq an$, the number of edges in $G_n$ is concentrated around $(a+\lambda/2)n$.
\end{thm}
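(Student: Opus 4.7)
The plan is to combine the disjoint-triangle structure exposed by Theorem~\ref{structure:disjoint triangles} with a joint large-deviation analysis analogous to \eqref{eqn:monfeb21426pm}, thereby reducing the edge-count question to a binomial fluctuation problem. Heuristically, conditionally on $V_T(G_n)=an$ the graph splits into approximately $an/3$ vertex-disjoint triangles (carrying exactly $an$ edges) plus an essentially independent Erd\H{o}s--R\'enyi layer on the remaining $\binom{n}{2}-an$ edge slots (carrying a $\dBin(\binom{n}{2}-an,p_n)$ number of edges, hence concentrated around its mean $\lambda n/2+O(1)$). Adding the two layers gives the target concentration point $(a+\lambda/2)n$.

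To make this rigorous, I would first establish the joint large-deviation estimate
\[
\mathbb{P}\big(V_T(G_n)=an,\ E(G_n)=m\big) = \frac{(n)_{an}}{(3!)^{an/3}(an/3)!}\,\binom{\binom{n}{2}-an}{m-an}\, p_n^m\, (1-p_n)^{\binom{n}{2}-m}\, e^{o(n^{19/20})},
\]
following the same scheme as the proof of \eqref{eqn:monfeb21426pm}. The right-hand side is, up to the $e^{o(n^{19/20})}$ factor, the first moment of the number of pairs $(G,\cT)$ with $G=G_n$, $|E(G)|=m$ and $\cT$ a vertex-disjoint triangle packing of size $an/3$ in $G$; the overcounting coming from graphs that admit several such packings is controlled by Theorem~\ref{structure:disjoint triangles} (a typical graph in the conditioned ensemble carries essentially a unique maximal packing) and absorbed into the $e^{o(n^{19/20})}$ error exactly as in Section~\ref{sec:ub}. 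Dividing the joint estimate by \eqref{eqn:monfeb21426pm} (noting that $\mathbb{P}(V_T=an)$ differs from $\mathbb{P}(V_T\geq an)$ by at most a polynomial factor) yields
\[
\mathbb{P}\big(E(G_n)=m\,\big|\,V_T(G_n)=an\big) \leq \mathbb{P}\big(\dBin(\binom{n}{2}-an,\,p_n) = m-an\big)\, e^{o(n^{19/20})},
\]
so summing over $m$ with $|m-(a+\lambda/2)n|\geq \varepsilon n$ produces a Chernoff bound $\exp(-c\varepsilon^2 n + o(n^{19/20}))\to 0$ for every fixed $\varepsilon>0$.

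The main obstacle is establishing the joint large-deviation estimate. The edge-count constraint $E(G_n)=m$ is easy to track given a fixed packing $\cT$, because the non-$\cT$ edges are unconditionally i.i.d.\ $\dBer(p_n)$, so their total is $\dBin(\binom{n}{2}-3|\cT|,p_n)$. The subtle point is that conditioning on $V_T=an$ additionally forbids any non-$\cT$ edge from creating a new triangle incident to a vertex outside $\cT$. In the sparse regime $p_n=\lambda/n$, however, the expected number of such forbidden triangles is $O(1)$, so this constraint affects only finitely many edges in expectation and contributes only an $O(1)$ multiplicative correction, comfortably absorbed into the $e^{o(n^{19/20})}$ slack. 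This is the same mechanism by which the proof of Theorem~\ref{thm:ldp_vertices_in_triangle} passes from ``$G_n$ contains a disjoint triangle packing of size $q/3$'' to the event $\{V_T(G_n)\geq q\}$ itself.
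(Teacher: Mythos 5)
Your heuristic is sound (and agrees with the intuition the paper itself records right after Corollary~\ref{cor-LDP-edges}), but the route you take --- a joint local large-deviation estimate for the pair $(V_T(G_n),E(G_n))$ --- is not the one the paper uses, and as sketched it contains a genuine gap. The paper proves Theorem~\ref{thm-conc-edges} by an exponential-tilting argument (Lemmas~\ref{lemma:tuedec20434pm} and~\ref{lem:tuedec201245pm}): one writes the exact identity
\begin{equation*}
\mathbb{E}\bigl[\e^{te(G_n)}\indic{V_T(G_n)=an}\bigr]
=\bigl(\phi_n(t)\bigr)^{\binom n2}\,\mathbb{P}_{p_n(t)}\bigl(V_T(G_n)=an\bigr),
\qquad p_n(t)=\frac{\e^t p_n}{1-p_n+\e^t p_n},
\end{equation*}
then applies the already-proven Theorem~\ref{thm:ldp_vertices_in_triangle} at both $p_n$ and $p_n(t)$ to get $\lim_n n^{-1}\log \mathbb E[\e^{te(G_n)}\mid V_T(G_n)=an]=\tfrac12\lambda(\e^t-1)+at$, and concludes by the Markov/G\"artner--Ellis step of Lemma~\ref{lemma:tuedec20434pm}. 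No joint estimate and no explicit triangle-packing bookkeeping are required, and Theorem~\ref{structure:disjoint triangles} is not invoked at all.

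The gap in your argument sits exactly where you flag it: the joint estimate is stated but not proven, and the two ingredients you cite to control it do not quite deliver the $\e^{o(n^{19/20})}$ slack. For the upper bound you need to pass from $\{V_T(G_n)=an,\,E(G_n)=m\}$ to a first-moment count over packings of size $an/3$; but Theorem~\ref{structure:disjoint triangles} only says $\mathbb{P}(\DT_n\le \tfrac13(a-\varepsilon)n\mid V_T=an)\le\exp(-\tfrac16\varepsilon n\log n+Cn)$ for \emph{fixed} $\varepsilon>0$, so on the surviving event you only have a packing of size $\ge(a-\varepsilon)n/3$. The missing up-to-$\varepsilon n$ triangle vertices must then be accounted for by the $(\ell_1,\ell_2,\ell_{31},\ell_{32})$ bookkeeping of Section~\ref{sec:ub}, re-introducing the full decomposition argument and an $\e^{O(\varepsilon n\log n)}$ uncertainty in the packing count, which swamps $\e^{o(n^{19/20})}$ unless you run $\varepsilon=\varepsilon_n\to0$ carefully and redo the combinatorics. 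The overcounting of graphs admitting multiple near-maximal packings is likewise not addressed by Theorem~\ref{structure:disjoint triangles} (which bounds $\DT_n$, not the number of packings). The ``forbidden triangle'' constraint you estimate as an $O(1)$ multiplicative correction is indeed the same mechanism the paper uses via Harris in Section~\ref{sec:lb}, but there it is only needed one-sidedly for the lower bound; a two-sided version for your purposes would also need a matching upper bound on that conditional probability. (Minor slip: you assert the joint formula with ``$=$'' but then only use it as a ``$\le$''; as you observe, only the upper bound is needed since the denominator is controlled by the lower bound in Theorem~\ref{thm:ldp_vertices_in_triangle}.) In short, your plan is a plausible alternative but is meaningfully harder to make rigorous than the paper's tilting argument, which avoids all the packing combinatorics.
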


%%%

\subsection{Upper bound in Theorem \ref{thm:ldp_vertices_in_triangle}}
\label{sec:ub}

A central definition in the proof is that of a \emph{$q$-basic graph} and its configuration:

\begin{dfn}[$q$-basic graphs]
\label{def-q-basic}
$\mbox{}$
{\rm A subgraph $G \subseteq K_n$ is called \emph{$q$-basic} if $V_{\sss T}(G) = q$ and $V_{\sss T}(G\setminus e) < q$ for all edges $e \in G$. Here, $G\setminus e$ denotes the graph with the edge $e$ removed.}
\end{dfn}

%%%%%%%%%%%%%%%%%%%%%%%%%%%%%%%%%%%%%%%%%%%%
\begin{figure}
\includegraphics[scale=0.3]{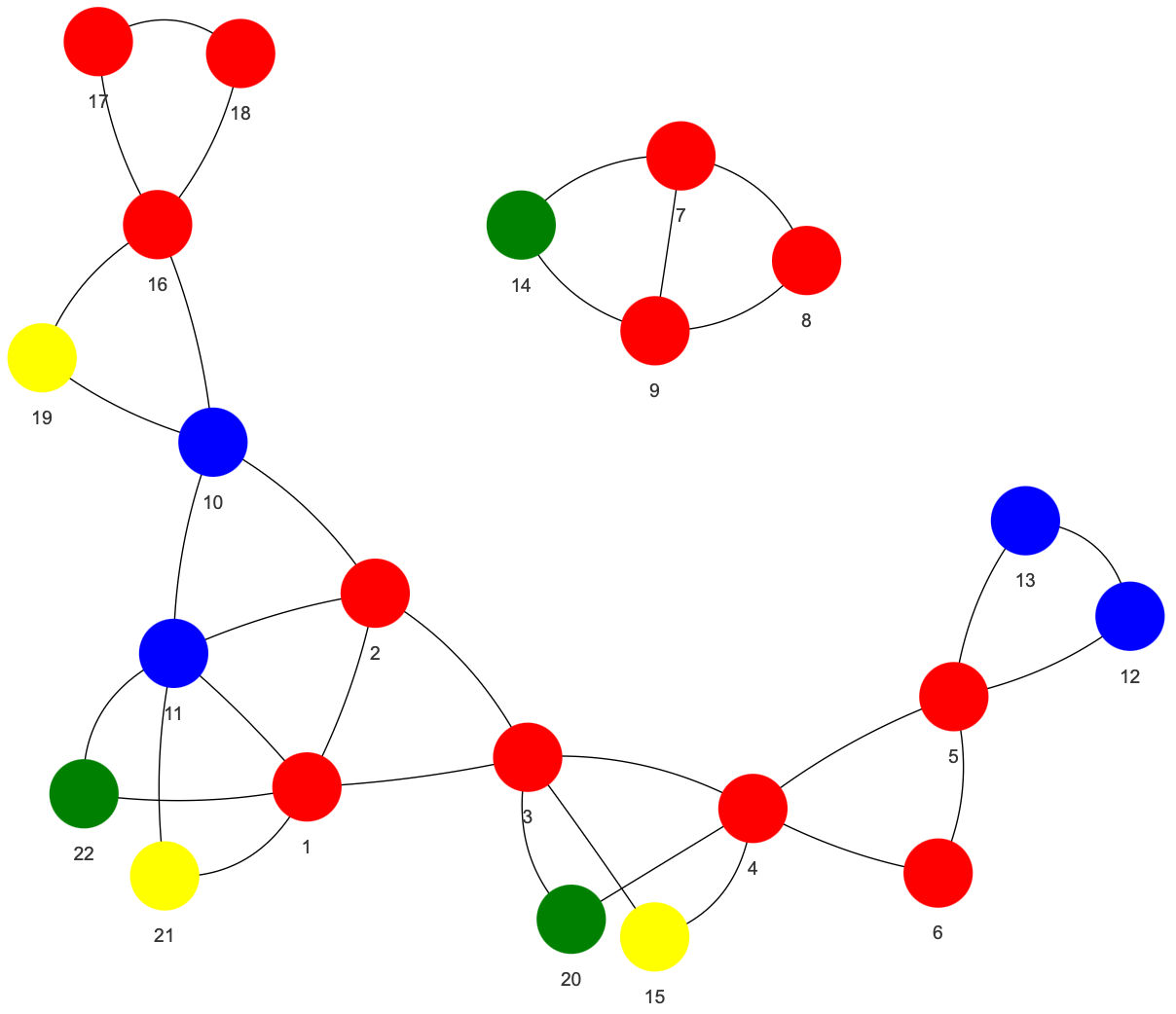}
\caption{A stylized example of the graph decomposition of a $q$-basic graph. $V_1$ consists of the red vertices, $V_2$ of the blue vertices, $V_{31}$ of the yellow vertices, and $V_{32}$ of the green vertices, and $\mathcal{W}=\{(1,11),(3,4),(10,16)\}$.} 
\label{fig:graph_decomposition}
\end{figure}
%%%%%%%%%%%%%%%%%%%%%%%%%%%%%%%%%%%%%%%%%%%%

\noindent
In the following, we derive a precise estimate for the number of subgraphs of $K_n$ containing a specified number of vertices, denoted by $q$, that belong to at least one triangle. To achieve this, we count the number of $q$-basic subgraphs of $K_n$. We will come back to this counting after proving the main technical lemma that facilitates the analysis (Lemma \ref{lem:fridec20454pm} below). A weaker version of this lemma was used in the proof of \cite[Theorem 1.8]{ChaHofHol21}. The following lemma is more precise, and is of independent interest. We use the following standard notation: if $G$ is a graph with vertex set $V$, then, for $U\subseteq V$, $G[U]$ denotes the subgraph induced in $G$ by $U$. For a fixed graph $H$ on $h$ vertices, we say that a graph $G$ on $n$ vertices has an $H$-factor if $G$ contains $n/h$ vertex disjoint copies of $H$. For example, if $H$ is just an edge $H= K_2$, then an $H$-factor is a perfect matching in $G$.

\begin{lem}[$q$-basic graph decomposition]
\label{lem:fridec20454pm}
We can partition the vertices of a $q$-basic graph $G=(V,E)$ into (not necessarily uniquely) sets $(V_1,V_2,V_{31}, V_{32})$ in such a way that the following conditions are met: 
\begin{itemize}
\item[(1)]  
$G[V_1]$ has a $K_3$-factor, while $G[V\setminus V_1]$ has no triangles. (This $K_3$ factor contributes $|V_1|$ edges. $G[V_1]$ may contain additional edges that connect two disjoint triangles in this $K_3$-factor, and we construct a set called $\mathcal{W}$ consisting of these additional edges.)
\item[(2)] $G[V_2]$ has a $K_2$-factor, where the ends of the disjoint edges of this $K_2$-factor have a common neighbor in $V_1$, $G[V_2]$ does not have any additional edge, and $G[V\setminus (V_1\cup V_2)]$ is an independent set. (Note that this $K_2$ factor contributes $|V_2|/2$ edges. The ends of the disjoint edges in $G[V_2]$ have a common neighbor in $V_1$ and this contribute another $|V_2|$ edges. There might still be more edges between $V_1$ and $V_2$, and we add them to the set $\mathcal{W}$.)
\item[(31)] 
Each vertex in $V_{31} \subseteq V\setminus (V_1\cup V_2)$ is a common neighbor of endpoints of some edge in $\mathcal{W}$, and endpoints of each edge in $\mathcal{W}$ has exactly one common neighbor in $V_{31}$. (Note that the edges in $\mathcal{W}$ and the edges between the endpoints of edges in $\mathcal{W}$ and $V_{31}$ contributes another $3|V_{31}|$ edges.)
\item[(32)] 
Each vertex in $V_{32}=V\setminus (V_1\cup V_2\cup V_{31})$ is a common neighbor of endpoints of an edge in $G[V_1]$ or of an edge between $V_1$ and $V_2$. (Note that every vertex $V_{32}$ contributes two edges as they are common neighbor of endpoints of an edge in $G[V_1]$ or of an edge between $V_1$ and $V_2$, and therefore they contribute  $2|V_{31}|$ edges in total.)
\end{itemize}
\end{lem}

\begin{proof}
The decomposition is done in a greedy manner. See Figure \ref{fig:graph_decomposition} for an example of the partition.\par
In the first step we extract disjoint triangles (in an arbitrary order) from $G$, and we stop when we are unable to extract more triangles (thus, $G[V\setminus V_1]$ is triangle-free). The vertex set of the extracted disjoint triangles is denoted by $V_1$. By construction, $G[V_1]$ has a $K_3$ factor. Note that there could be edges in $G[V_1]$ that are not used in the formation of the $K_3$ factor. We construct a set called $\mathcal{W}$ consisting of these additional edges.
\par
We first claim that $G[V\setminus V_1]$ does not contain a path of length three. We prove this claim by contradiction. Suppose that $G[V\setminus V_1]$ has a path of length three consisting of the edges $(u_1,u_2),(u_2, u_3), (u_3,u_4)$. We show that even if we remove the edge $(u_2,u_3)$, then also all the vertices in $V$ remain part of some triangle, which contradicts the fact that $G$ is $q$-basic. First note that every vertex in $V_1$ is part of some triangle in $G[V_1]$ as $G[V_1]$ has a $K_3$-factor. Secondly, since $G$ is $q$-basic, the edges $(u_1,u_2)$ and $(u_3,u_4)$ must be part of some triangles of the form $(u_1,u_2,o_1)$ and $(u_3,u_4,o_2)$, where $o_1,o_2 \in V_1$. Note that $o_1,o_2$ cannot be outside $V_1$, as $G[V\setminus V_1]$ is triangle-free. Therefore $u_2$ and $u_3$ are part of some triangles that do not involve the edge $(u_2,u_3)$. Finally, take a vertex $w \in V\setminus (V_1 \cup \{u_2,u_3\})$. Note that, although $w$ must be part of some triangle, it cannot be part of triangle of the form $(u_2,u_3,w)$, as this triangle lies outside $V_1$. Therefore, even if we remove the edge $(u_2,u_3)$, all vertices of $G$ remain part of some triangle, proving our claim.
\par
Next we extract disjoint edges from $G[V\setminus V_1]$, we stop when there are no more disjoint edges left, and we denote the vertex set of these disjoint edges by $V_2$. Thus, by construction, $G[V\setminus (V_1\cup V_2)]$ is an independent set and $G[V_2]$ has a $K_2$ factor. Since $G$ is a $q$-basic graph, the edges in $G[V_2]$ must be part of some triangle in $G$, and therefore the endpoints of each edge in $G[V_2]$ must have some common neighbor in $V_1$. This common neighbor cannot be in $V\setminus V_1$, as that would create a triangle in $G[V\setminus V_1]$. The $K_2$ factor of $G[V_2]$ contributes $|V_2|/2$ edges in $G[V_2]$, and the endpoints of each of these edges have a common neighbor in $V_1$, which contributes $|V_2|$ edges between $V_1$ and $V_2$. If there are more edges between $V_1$ and $V_2$, then we add them to the set $\mathcal{W}$.
\par
Since $G$ is $q$-basic, the endpoints of each edge in $\mathcal{W}$ must have at least one common neighbor in $V\setminus V_1 \cup V_2$ (note that this common neighbor cannot be in $V_1 \cup V_2$, as all the vertices in $V_1 \cup V_2$ are already part of some triangle that do not contain edges from $\mathcal{W}$). For each edge in $\mathcal{W}$ we pick exactly one such vertex from $V\setminus V_1 \cup V_2$ that is a common neighbor of the endpoints of that edge, and construct the set $V_{31}$ with these vertices (if there are more than one, then we arbitrarily pick one). Note that $|V_{31}|=|\mathcal{W}|$ and this contributes $3|V_{31}|$ edges ($|V_{31}|$ edges in $|\mathcal{W}|$ and endpoints of edge in $\mathcal{W}$ have exactly one common neighbor in $V_{31}$, which contributes $2|V_{31}|$ edges).
\par
Finally, $V_{32}= V\setminus V_1\cup V_2 \cup V_{31}$. They are common neighbor of endpoints of an edge between $V_1$ and $V_2$ or common neighbor of an edge in $V_1$. Therefore they contribute $2|V_{32}|$ edges (the edges between $V_1$ and $V_2$ and the edges in $V_1$ were already counted before).
\end{proof}

\noindent
The three steps in the proof of Lemma \ref{lem:fridec20454pm} are illustrated in Figure \ref{fig:graph_decomposition}. 

We next define $q$-basic graph with a given configuration. This will help us count the number of edges in such graphs. 

\begin{dfn}[$q$-basic graphs with configuration]
\label{def-q-basic_config}
{\rm A $q$-basic graph $G$ has an $(\ell_1,\ell_2,\ell_{31},\ell_{32})$ configuration if there is a graph partition $(V_1,V_2, V_{31}, V_{32})$ as described in Lemma \ref{lem:fridec20454pm} with $|V_i|=\ell_i$ for $i=1,2, 31, 32$. (Note that $q=\ell_1+\ell_2+\ell_{31}+ \ell_{32}$.).}
\end{dfn}

It is clear that the number of edges in a $q$-basic graph with an $(\ell_1,\ell_2,\ell_{31},\ell_{32})$ configuration is equal to $\ell_1 + \tfrac32 \ell_2 + 3 \ell_{31} + 2\ell_{32}$. The number of $q$-basic subgraphs of the complete graph $K_n$ with an $(\ell_1,\ell_2,\ell_{31},\ell_{32})$ configuration is at most 
	\begin{equation}
	\label{e:form1}
	\frac{n(n-1) \times\cdots\times (n-q +1)} {(3!)^{\ell_1/3} \, (\ell_1/3)! \, (2!)^{{\ell_2/2}} \, (\ell_2/2)! \, \ell_3!}
	\times \ell_1^{\ell_2/2} \times {\ell_1^2/2 
	+ \ell_1 \ell_2 \choose \ell_{31}} \times {\ell_1 + \ell_2 + \ell_{31} \choose \ell_{32}},
	\end{equation}
where $\ell_3 = \ell_{31} + \ell_{32}$. Indeed, the factor $n!/(n-q)!$ equals the number of ways we can choose {an ordered set of $q$ distinct vertices out of $n$ vertices}. However, permutations of the 3 vertices per triangle, as well as the collection of 3 vertices in the $\ell_1/3$ triangles, leaves the $q$-basic graph unchanged, so we have to divide by $(3!)^{\ell_1/3}\,(\ell_1/3)!$. Similarly, permutations of the pairs of vertices in the $\ell_2$ edges in $V_2$, and the $\ell_2$ edges, leave the $q$-basic graph unchanged, so we have to divide by $(2!)^{\ell_2/2}\,(\ell_2/2)!$. In the same vein, we have to divide by $\ell_3!$. This gives us the number of ways to partition the vertex set into $(V_1,V_2,V_{31}, V_{32})$. Per edge in $V_2$, we have $\ell_1$ choices for the common neighbour of its endpoints, which gives rise to $\ell_1^{\ell_2/2}$ possibilities. Per vertex in $V_{31}$, we have to choose a pair of vertices in $V_1$, or in $V_1$ and $V_2$, which gives rise to ${\ell_1^2/2 + \ell_1 \ell_2 \choose \ell_{31}}$ possibilities. Per vertex in $V_{32}$, we need to choose an edge in a triangle in $V_1$, of which there are $\ell_1$, or an edge between a vertex in $V_2$ and a vertex in $V_1$ that is used to create the triangle containing the edge in $V_2$, of which there are $\ell_2$, or an edge between $V_2$ and $V_1$ that is used to construct $V_{31}$, of which there are $\ell_{31}$. This gives rise to ${\ell_1+ \ell_2+\ell_{31} \choose \ell_{32}}$ possibilities.

We estimate $\tfrac12 \ell_1^2 + \ell_1 \ell_2 \leq q^2 \leq n^2$ and $\ell_1+ \ell_2 + \ell_{31} \leq q \leq n$, to get that \eqref{e:form1} is bounded above by
	\begin{equation}
	\label{eqn:815am01sep22}
	\frac{n(n-1) \times\cdots\times (n-q +1)} {(3!)^{\ell_1/3} \, (\ell_1/3)! \, (2!)^{{\ell_2/2}} \, (\ell_2/2)! \, \ell_3!} 
	\times \exp\big((\ell_2/2)\log{n} + 2\ell_{31}\log{n} + \ell_{32} \log{n}\big).
	\end{equation}
We estimate
	\begin{equation}
	\label{eq:81401sep22}
	\begin{aligned}
	\mathbb{P}(V_{T}(G_n) \geq k_n) 
	&= \sum_{q\geq k_n} \mathbb{P}(V_{T}(G_n) = q)\\
	&\leq \sum_{q\geq k_n} \mathbb{P}\big(G_{n,p_n} \text{ contains a }q \text{-basic subgraph}\big) \\
	& \leq \sum_{q\geq k_n} \sum_{\ell_1 + \ell_2 + \ell_{31} + \ell_{32} = q}
	\mathbb{P}\big(G_{n,p_n} \text{ contains a }q \text{-basic } (\ell_1, \ell_2, \ell_{31}, \ell_{32}) \text{ subgraph}\big) \\
	&\leq \sum_{q\geq k_n}\sum_{\ell_1 + \ell_2 + \ell_{31} + \ell_{32} = q}
	p_n^{\ell_1 + 3\ell_2/2 + 3\ell_{31} + 2\ell_{32}} \, \# \{q\text{-basic } (\ell_1, \ell_2, \ell_{31}, \ell_{32}) 
	\text{ subgraph}\} \\
	&= \sum_{q\geq k_n} p_n^{q} \sum_{\ell_1 + \ell_2 + \ell_{31} + \ell_{32} = q} 
	p_n^{\ell_2/2 + 2\ell_{31} + \ell_{32}} \, 
	\# \{q\text{-basic } (\ell_1, \ell_2, \ell_{31}, \ell_{32}) \text{ subgraphs}\}.
	\end{aligned}
	\end{equation}
Substituting \eqref{eqn:815am01sep22} into \eqref{eq:81401sep22}, we get 
	\begin{equation}
	\label{eq:82001sep22}
	\begin{aligned}
	\mathbb{P}(V_{T}(G_n) \geq k_n) \leq n^5 \!\!\!
	\max_{\ell_1 + \ell_2 + \ell_{31} + \ell_{32}\geq k_n} {\lambda}^{\ell_2/2 + 2\ell_{31} + \ell_{32}}
	\frac{n(n-1) \times\cdots\times (n-q +1)} {(3!)^{\ell_1/3}(\ell_1/3)! (2!)^{{\ell_2/2}}(\ell_2/2)!\ell_3!}.
	\end{aligned}
	\end{equation}
We are left with maximising 
	\begin{equation}
	\label{e:form2}
	\frac{{\lambda}^{\ell_2/2 + 2\ell_{31} + \ell_{32}}} {(3!)^{\ell_1/3}\,(\ell_1/3)!\,(2!)^{{\ell_2/2}}\,(\ell_2/2)!\,\ell_3!},
	\end{equation}
subject to $\ell_1+\ell_2+\ell_{31}+\ell_{32}=q$. Using Stirling's approximation $n! = n\log{n}-n+O(\log{n})$, we get that \eqref{e:form2} equals
	\begin{equation}
	\label{e:form3}
	\begin{aligned}
	&\exp\Big((\ell_2/2 + 2\ell_{31} + \ell_{32})\log\lambda - (\ell_1/3)\log(\ell_1/3) + (\ell_1/3)(1-\log{3!})\\
	&\qquad - (\ell_2/2) \log(\ell_2/2) + (\ell_2/2)(1-\log{2!}) - \ell_3\log{\ell_3} + \ell_3 + O(\log(\ell_1\vee \ell_2 \vee \ell_3))\Big).
	\end{aligned}
	\end{equation}
We can trivially upper bound \eqref{e:form3} by
	\begin{equation}
	\label{pen-final-bound-UB}
	\begin{aligned}
	&\exp\Big((\ell_2/2 + 2\ell_{3}) (\log\lambda)_+ - (\ell_1/3) \log(\ell_1/3) + (\ell_1/3) (1-\log{3!})\\
	&\qquad - (\ell_2/2)\log(\ell_2/2) + (\ell_2/2) (1-\log{2!})- \ell_3\log{\ell_3}  + \ell_3 + O(\log(\ell_1\vee \ell_2 \vee \ell_3)\Big),
	\end{aligned}
	\end{equation}
where $x_+=\max\{x,0\}$.

In order to investigate this bound further, we state a technical lemma:

\begin{lem}[Variational problem]
\label{lem:thudec1402pm}
For large enough $q$, the minimum value of 
	\begin{equation}
	f(x_1,x_2,x_3) = \tfrac13 x_1 \log{x_1} + \tfrac12 x_2 \log{x_2} + x_3 \log{x_3} 
	+ c_1x_1+c_2x_2+c_3x_3,
	\end{equation}
subject to $x_1+x_2+x_3=q$, is attained when $x_1 \geq q-q^{9/10}$ and $x_2, x_3 \leq q^{9/10}$. In particular, there exists a $C>0$ such that
	\begin{equation}
	\min_{x_1+x_2+x_3=q}f(x_1,x_2,x_3) \geq  \tfrac13 q\log{q}+ c_1q- Cq^{9/10}\log{q}.
	\end{equation}
\end{lem}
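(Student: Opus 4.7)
The plan is to locate the minimiser by a perturbation argument and then evaluate $f$ there. The function $f$ is continuous on the compact simplex $\{(x_1,x_2,x_3):x_i\geq 0,\, x_1+x_2+x_3=q\}$ (with the convention $0\log 0:=0$), so the infimum is attained at some $(x_1^*,x_2^*,x_3^*)$. I would first establish $x_2^*,x_3^*\leq q^{9/10}$. Suppose for contradiction $x_2^*>q^{9/10}$, and transfer $\delta:=x_2^*/2$ from the second to the first coordinate. Applying the mean value theorem to $g(x)=x\log x$, whose derivative is $\log x+1$, gives
\begin{equation*}
\Delta f = \tfrac{1}{3}\delta(\log\xi_1+1) - \tfrac{1}{2}\delta(\log\xi_2+1) + (c_1-c_2)\delta,
\end{equation*}
with $\xi_1\in(x_1^*,x_1^*+\delta)$ and $\xi_2\in(x_2^*/2,x_2^*)$. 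Since $x_1^*+\delta\leq q$ and $\xi_2\geq q^{9/10}/2$, this yields
\begin{equation*}
\Delta f \leq \delta\bigl[(\tfrac{1}{3}-\tfrac{9}{20})\log q+O(1)\bigr]=-\tfrac{7}{60}\,\delta\log q+O(\delta)<0
\end{equation*}
for $q$ large, contradicting minimality. The same argument applied with the coefficient $\tfrac{1}{2}$ replaced by $1$ (the prefactor of $x_3\log x_3$) shows $x_3^*\leq q^{9/10}$, with the even stronger gain $\Delta f\leq -\tfrac{17}{30}\delta\log q+O(\delta)$.

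From $x_2^*,x_3^*\leq q^{9/10}$ I would deduce $x_1^*\geq q-2q^{9/10}$; the factor $2$ can be absorbed into $C$, delivering the first assertion. Plugging the minimiser into $f$ and using $\log(q-2q^{9/10})=\log q+O(q^{-1/10})$ together with the elementary inequality $x\log x\geq -1/\e$ on $[0,\infty)$, I would bound
\begin{align*}
\tfrac{1}{3}x_1^*\log x_1^* &\geq \tfrac{1}{3}(q-2q^{9/10})\bigl(\log q-O(q^{-1/10})\bigr) = \tfrac{1}{3}q\log q-O(q^{9/10}\log q),\\
\tfrac{1}{2}x_2^*\log x_2^*+x_3^*\log x_3^* &\geq -\tfrac{3}{2\e},\\
c_1x_1^*+c_2x_2^*+c_3x_3^* &\geq c_1 q - 2\bigl(|c_1|+|c_2|+|c_3|\bigr)q^{9/10}.
\end{align*}
Summing these three contributions produces the claimed lower bound $f\geq \tfrac{1}{3}q\log q+c_1q-Cq^{9/10}\log q$ for a suitable constant $C$.

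The main obstacle is calibrating the perturbation scale. With entropy-coefficients $\tfrac{1}{3},\tfrac{1}{2},1$, a bare balance between $\tfrac{1}{3}x_1\log x_1$ and $\tfrac{1}{2}x_2\log x_2$ only forces $x_2\lesssim q^{2/3}$; the choice $x_2\geq q^{9/10}$ buys a comfortable slack $\tfrac{7}{60}\log q$ that absorbs all $O(1)$ contributions from the linear terms $(c_1-c_2)\delta$ (and similarly for $x_3$), so the perturbation is unconditionally profitable once $q$ is large. Correspondingly, the exponent $9/10$ propagates into the error term $Cq^{9/10}\log q$ in the value bound, which is what eventually feeds the $\e^{o(n^{19/20})}$ factor appearing in Theorem~\ref{thm:ldp_vertices_in_triangle}.
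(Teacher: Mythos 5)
Your proof is correct, and it takes a genuinely different route from the paper's. The paper sets up a Lagrange multiplier equation
\begin{equation*}
\exp(3\mu-3c_1-1)\bigl[1+\exp(-\mu+3c_1-2c_2)+\exp(-2\mu+3c_1-2c_3)\bigr]=q
\end{equation*}
and sandwiches the multiplier $\mu$ between explicit values $\mu_L$ and $\mu_U$; it does not spell out how this pins down $(x_1,x_2,x_3)$ or handle the possibility that the minimizer lies on the boundary of the simplex (where $x_2=0$ or $x_3=0$ and the stationarity condition fails). Your mass-transfer perturbation avoids both issues: it works directly with an exact minimizer, applies the mean value theorem to $g(x)=x\log x$, and shows that pushing half of any coordinate above $q^{9/10}$ into $x_1$ strictly decreases $f$ once $q$ is large. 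This handles interior and boundary minimizers uniformly and makes explicit the slack $\tfrac{1}{3}-\tfrac{9}{20}=-\tfrac{7}{60}$ (resp.\ $\tfrac{1}{3}-\tfrac{9}{10}=-\tfrac{17}{30}$) that absorbs the $O(1)$ contributions from the linear terms. Your evaluation step is also clean: monotonicity of $x\mapsto x\log x$ for $x\geq1$, the elementary bound $x\log x\geq -1/\mathrm e$, and $\log(q-2q^{9/10})=\log q+O(q^{-1/10})$ give precisely the stated $\tfrac13 q\log q+c_1q-Cq^{9/10}\log q$.

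One cosmetic mismatch: your argument as written yields $x_1^*\geq q-2q^{9/10}$, whereas the lemma asserts $x_1^*\geq q-q^{9/10}$. This is immaterial for the second (and operationally used) assertion, as you note, and it can be fixed for the first assertion by using staggered thresholds (say, showing $x_2^*\leq q^{0.7}$ and $x_3^*\leq q^{0.5}$ with the same perturbation, which still has a positive slack of $\tfrac{7}{20}-\tfrac13=\tfrac1{60}$ and $\tfrac12-\tfrac13=\tfrac16$ respectively), so that $x_2^*+x_3^*\leq q^{9/10}$ for large $q$. Overall your proof is more elementary, more self-contained, and arguably more complete than the one in the paper.
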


\begin{proof}
We use the Lagrange multiplier method. Define
	\begin{equation}
	\Lambda = f(x_1,x_2,x_3) + \mu (q-x_1-x_2-x_3).
	\end{equation}
The partial derivatives of $\Lambda$ with respect to $x_1,x_2,x_3$ are
	\begin{equation}
	\begin{aligned}
	&\frac{\partial \Lambda}{\partial x_1} = \tfrac{1}{3} \log{x_1} + \tfrac{1}{3} + c_1 - \mu, \\
	&\frac{\partial \Lambda}{\partial x_2} = \tfrac{1}{2} \log{x_2} + \tfrac{1}{2} + c_2 - \mu, \\
	&\frac{\partial \Lambda}{\partial x_3} = \log{x_3} + 1 + c_3 - \mu.
	\end{aligned}
	\end{equation}
Setting these equal to zero, we get the following relation for the Lagrange multiplier $\mu$:
	\begin{equation}
	\exp(3\mu-3c_1-1)\Big[1+\exp(-\mu+3c_1-2c_2)+ \exp(-2\mu+3c_1-2c_3)\Big] = q.
	\end{equation}
The solution is bounded above by $\mu_U$ and bounded below by $\mu_L$ for large enough $q$, where $\mu_U$ and $\mu_L$ are solutions of the equations
	\begin{equation}
	\exp(3\mu_U-3c_1-1) = q, \qquad \exp(3\mu_L-3c_1-1) = q-q^{9/10}.
	\end{equation}
	\end{proof}

By Lemma \ref{lem:thudec1402pm}, with $x_1=\ell_1, x_2=\ell_2, x_3=\ell_{31}+\ell_{32}, c_1=(1+\log{3}-\log{3!})/3$, for large enough $q$, \eqref{pen-final-bound-UB} is bounded above by 
	\begin{equation}
	\label{eqn:thudec221128pm}
	\exp\left(-\tfrac13 q\log(\tfrac13 q) + \tfrac13 (1-\log{3!}) q + Cq^{9/10}\log{q}\right).
	\end{equation}
This completes the proof of the upper bound in Theorem \ref{thm:ldp_vertices_in_triangle}.\qed

%%%

\subsection{Lower bound in Theorem \ref{thm:ldp_vertices_in_triangle}}
\label{sec:lb}

In this section, we prove the lower bound in Theorem \ref{thm:ldp_vertices_in_triangle}.  We abbreviate $s_n=an$, and assume that $3|an$. Then we have the following lower bound:
	\begin{equation}
	\label{eqn:244pm02may22}
	\begin{aligned}
	&\mathbb{P}(V_{ T}(G_{n,p_n}) \geq an) \geq \frac{n(n-1) \times\cdots\times (n-s_n+1)}{(3!)^{s_n/3}(s_n/3)!} \\
	&\qquad\times\mathbb{P}(123, 456, \ldots, s_{n-2}s_{n-1}s_n\;\text{ form triangles and the rest 
	of the graph is triangle free}).
	\end{aligned}
	\end{equation}
Indeed, the prefactor counts the number of ways to choose $s_n$ triples. The event that these triples form triangles and the event that the rest of the graph is triangle free are independent, and each has the same probability as when the triples are 123, 456, etc. Next, we rewrite	
	\eqan{
	&\mathbb{P}(123, 456,\ldots, s_{n-2}s_{n-1}s_n\;\text{ form triangles and the rest of the graph is triangle free})\\
	&\quad =p_n^{s_n} \mathbb{P}(\text{the rest of the graph is triangle free}\mid 123, 456,\ldots, s_{n-2}s_{n-1}s_n\text{ form triangles}).   \nn
	}
Define $B_n$ to be the set of edges that are used to form the triangles $R_n=\{123, 456,\ldots, s_{n-2}s_{n-1}s_n\}$. Clearly, $|B_n|=s_n$. Conditionally on all the edges in $B_n$ being present, the number of triangles in the graph $G_n$ excluding $R_n$ can be written as
	\begin{equation}
	\label{eqn:308pm02may22}
	W_n = \sum_{\{(i,j,k)\colon 1\leq i<j<k \leq n\}\setminus R_n} a_{ij}a_{jk}a_{ki},
	\end{equation}
where $(a_{ij})_{1 \leq i < j \leq n}$ is the adjacency matrix of $G_n$. Write
	\eqn{
	\{W_n=0\}=\cap_{\Delta} \{\Delta \text{ not present}\},
	}
where the intersection runs over all triangles $\Delta$ except for $123,456,\ldots, s_{n-2}s_{n-1}s_n$. Note that even under the conditioning the edges that are not in the triangles $123,456,\ldots, s_{n-2}s_{n-1}s_n$ are independent. Since the events $\{\Delta \text{ not present}\}$ are all decreasing, we can use Harris inequality \cite{Harr60} to get  
	\begin{align}
	&\mathbb{P}(W_n=0\mid 123, 456,\ldots, s_{n-2}s_{n-1}s_n\text{ form triangles})\\
	&\qquad\geq \prod_{\Delta} \mathbb{P}(\Delta \text{ not present}\mid 123, 456,\ldots, s_{n-2}s_{n-1}s_n\text{ form triangles})\\
	&\qquad= \prod_{\Delta \text{ contains an edge from }B_n } \mathbb{P}(\Delta \text{ not present}\mid 123, 456,\ldots, s_{n-2}s_{n-1}s_n\text{ form triangles}) \\
	&\qquad \times \prod_{\Delta \text{ does not contains an edge from }B_n } \mathbb{P}(\Delta \text{ not present}\mid 123, 456,\ldots, s_{n-2}s_{n-1}s_n\text{ form triangles}) \\
	&\qquad  \geq \left(1-\frac{\lambda^2}{n^2}\right)^{ns_n} \times  \left(1-\frac{\lambda^3}{n^3}\right)^{{n \choose 3}} \geq 0.9 \exp\left(-a\lambda^2-\lambda^3/6\right),
	\end{align}
for all $n$ large enough when $s_n=an$ for a constant $a\in (0,1)$.
\smallskip\noindent

This proves that 
	\eqn{
	\mathbb{P}(V_{ T}(G_n)\geq an) \geq \frac{n(n-1) \times\cdots\times (n-s_n+1)}{(3!)^{s_n/3}(s_n/3)!}p_n^{s_n} c,
	}
for some $c>0$, as required.\qed

\begin{rmk}
{\rm Note that the conclusion of Theorem \ref{thm:ldp_vertices_in_triangle} is meaningful when $q\geq Cn^{19/20}$ for some constant $C>0$. We are interested in estimating $\log{\mathbb{P}(V_{T}(G_n) \geq q)}$ when $q\geq Cn^{19/20}$. In this regime, both $\mathbb{P}(V_{T}(G_n)  = q)$ and $\mathbb{P}(V_{T}(G_n) \geq q)$ satisfy the estimate given in the right-hand side of \eqref{eqn:monfeb21426pm}, as the extra contribution of the latter gets absorbed into the error term $\e^{o(n^{19/20})}$.}
\end{rmk}

%%%

\subsection{Most triangles are disjoint: proof of Theorem \ref{structure:disjoint triangles}}
\label{sec:trianglesdisjoint}

In this section, we use the technique developed for the proof of the upper bound in Theorem \ref{thm:ldp_vertices_in_triangle} to prove Theorem \ref{structure:disjoint triangles}. For this we wish to evaluate
	\begin{equation}
	\label{eqn:thumar301144am}
	\mathbb{P}\big(\DT_n(G_n)\leq \tfrac13(a-\varepsilon)n \,|\, V_T(G_n)\geq an\big) 
	= \frac{\mathbb{P}(\DT_n(G_n))\leq \tfrac13(a-\varepsilon)n, V_T(G_n)\geq an) }{\mathbb{P}(V_T(G_n) \geq an)}.
	\end{equation}
Note that for the denominator we can use the estimate in Theorem~\ref{thm:ldp_vertices_in_triangle}. Obtaining an estimate for the numerator is more subtle, and for that we again use our earlier decomposition argument. The crucial observation is the following: the event $\{\DT_n(G_n)\leq \frac13(a-\varepsilon)n, V_T(G_n)\geq an\}$ implies that $G_n$ admits an $(\ell_1,\ell_2,\ell_{31},\ell_{32})$ decomposition with $\ell_1 \leq (a-\varepsilon)n$. We obtain the following upper bound for any $\varepsilon>0$:
	\eqan{
	&\mathbb{P}\big(\DT_n(G_n)\leq {\tfrac13}(a-\varepsilon)n,V_{T}(G_n)  \geq an\big)\\
	&\qquad = \sum_{q\geq an} \mathbb{P}\big(\DT_n(G_n)\leq \tfrac13(a-\varepsilon)n,V_{T}(G_n) ={q}\big)\nn\\
	&\qquad \leq  \sum_{q\geq an} \mathbb{P}\big(G_n \text{ contains a }q \text{-basic subgraph with }
	\ell_1 \leq (a-\varepsilon)n\big)\nn\\
	&\qquad \leq \sum_{q\geq an} \sum_{\ell_1+\ell_2+\ell_{31}+\ell_{32}=q}
	\mathbb{P}\big(G_n \text{ contains a } q \text{-basic }\ell_1, \ell_2, \ell_{31}, \ell_{32} 
	\text{ subgraph with }\ell_1 \leq (a-\varepsilon)n\big).\nn
	}
This is bounded from above by
	\begin{equation}
	\begin{aligned}
	&\sum_{q\geq an} \sum_{\ell_1+\ell_2+\ell_{31}+\ell_{32}=q} {p_n}^{\ell_1 +3\ell_2/2 +3\ell_{31}+2\ell_{32}} 
	\# \{q\text{-basic }(\ell_1, \ell_2, \ell_{31}, \ell_{32}) \text{ subgraph with }\ell_1 \leq (a-\varepsilon)n\} \\
	&= \sum_{q\geq an} {p_n}^{q}\sum_{\ell_1+\ell_2+\ell_{31}+\ell_{32}=q} {p_n}^{\ell_2/2 +2\ell_{31}+\ell_{32}} 
	\# \{q\text{-basic } (\ell_1, \ell_2, \ell_{31}, \ell_{32}) \text{ subgraph with } \ell_1 \leq (a-\varepsilon)n\}.
	\end{aligned}
\end{equation}
Next, we use \eqref{eqn:815am01sep22} to obtain
	\begin{equation}
	\label{eqn:thumar301135am}
	\begin{aligned}
	&\mathbb{P}\big(\DT_n(G_n)\leq \tfrac13(a-\varepsilon)n,V_{T}(G_n) \geq an\big) \\ 
	&\leq n^4 \sum_{q\geq an} {p_n}^{q}\max_{\ell_1+\ell_2+\ell_{31}+\ell_{32}=q, \ell_1 
	\leq (a-\varepsilon)n} {\lambda}^{\ell_2/2 + 2\ell_{31} + \ell_{32}}
	\frac{n(n-1) \times\cdots\times (n-q +1)}{(3!)^{\ell_1/3} (\ell_1/3)!(2!)^{{\ell_2/2}}(\ell_2/2)! \ell_3!}.
	\end{aligned}
	\end{equation}
Note that an identical calculation as for Lemma \ref{lem:thudec1402pm} yields that 
	\begin{equation}
	\label{eqn:thumar1133am}
	\begin{aligned}
	&\max_{\ell_1+\ell_2+\ell_{31}+\ell_{32}=q, \ell_1 \leq \tfrac13(a-\varepsilon)n} 
	{\lambda}^{\ell_2/2 +2\ell_{31}+\ell_{32}}\frac{n(n-1) \times\cdots\times (n-q +1)} {(3!)^{\ell_1/3}
	(\ell_1/3)!(2!)^{{\ell_2/2}}(\ell_2/2)!\ell_3!} \\
	&\qquad\qquad\qquad \leq \exp\left(-\tfrac13(a-\varepsilon)n \log{n} -\tfrac12(q-a+\varepsilon)n\log{n} +O(n)\right).	
	\end{aligned}
	\end{equation}
Therefore, inserting \eqref{eqn:thumar1133am} into \eqref{eqn:thumar301135am}, we get
	\begin{equation}
	\label{eqn:thumar301137am}
	\begin{aligned}
	&\mathbb{P}\big(\DT_n(G_n)\leq\tfrac13(a-\varepsilon)n,V_{T}(G_n) \geq an\big) \\
	&\qquad \leq \exp\left(-\tfrac13(a-\varepsilon)n\log{n} -\tfrac12(q-a+\varepsilon)n\log{n} +O(n)\right).
	\end{aligned}
	\end{equation}
Finally, insert the estimates from Theorem~\ref{thm:ldp_vertices_in_triangle} and \eqref{eqn:thumar301137am} into \eqref{eqn:thumar301144am}, to get the claim. The argument even shows that $\mathbb{P}\big(\DT_n(G_n)\leq\tfrac13(a-\varepsilon)n,V_{T}(G_n) \geq an\big)$ decays like $\e^{-\delta n\log{n}}$ for some $\delta=\delta(a,\varepsilon)>0$ for every $a,\vep>0$.
\qed

%%%

\subsection{The graph is sparse: Proof of Theorem \ref{thm-conc-edges}}
\label{sec:sparse}

We will use the technique of generating functions to show that the graph is sparse. We need the following standard lemma:

\begin{lem}[Concentration in terms of moment generating functions]
\label{lemma:tuedec20434pm}
Let $X_n$ be a random variable such that
	\begin{equation}
	\label{eqn:tuedec20456pm}
	\lim_{n\rightarrow \infty}\frac{1}{n}\log \mathbb{E}\left[\e^{tX_n}\right] = \phi(t),
	\qquad t \in \mathbb{R},
	\end{equation}
for some $\phi\colon\,\mathbb{R}\rightarrow \mathbb{R}$ that is twice differentiable at $0$ and satisfies $\phi(0)=0$ and $\sup_{t \in [0,\delta]}|\phi''(t)|\leq C_{\delta}<\infty$ for some $\delta>0$. Then $\frac{X_n}{n}\rightarrow \phi'(0)$ in probability as $n\to\infty$.
\end{lem}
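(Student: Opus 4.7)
The proof is a standard Chernoff-style argument based on the assumed MGF limit. For the upper tail, I would fix $\varepsilon>0$ and apply Markov's inequality to $\e^{tX_n}$ with $t\in(0,\delta]$ to get
\begin{equation*}
\mathbb{P}(X_n\geq n(\phi'(0)+\varepsilon)) \leq \e^{-tn(\phi'(0)+\varepsilon)}\,\mathbb{E}[\e^{tX_n}].
\end{equation*}
Taking logarithms, dividing by $n$, and invoking the MGF limit \eqref{eqn:tuedec20456pm} yields
\begin{equation*}
\limsup_{n\to\infty} \tfrac{1}{n} \log \mathbb{P}(X_n/n\geq \phi'(0)+\varepsilon) \leq \phi(t)-t(\phi'(0)+\varepsilon).
\end{equation*}

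I would then Taylor expand $\phi$ about $0$: using $\phi(0)=0$ together with the sup bound $|\phi''|\leq C_\delta$ on $[0,\delta]$, the integral form of the remainder gives $\phi(t)\leq t\phi'(0)+\tfrac12 C_\delta t^2$ on $[0,\delta]$. The bound above therefore becomes $-t\varepsilon+\tfrac12 C_\delta t^2$, which is strictly negative when $t=\min(\varepsilon/C_\delta,\delta)>0$; this already proves exponential decay of $\mathbb{P}(X_n/n\geq \phi'(0)+\varepsilon)$.

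For the lower tail I would repeat the argument with $\e^{-tX_n}$ for $t>0$, producing
\begin{equation*}
\limsup_{n\to\infty}\tfrac{1}{n}\log\mathbb{P}(X_n/n\leq \phi'(0)-\varepsilon) \leq \phi(-t)+t(\phi'(0)-\varepsilon).
\end{equation*}
The quantitative sup bound on $\phi''$ is one-sided, so here I would instead use the two-sided second-order Taylor expansion $\phi(s)=s\phi'(0)+\tfrac{s^2}{2}\phi''(0)+o(s^2)$ afforded by twice differentiability of $\phi$ at $0$. Setting $s=-t$ makes the right-hand side equal to $-t\varepsilon+O(t^2)$, strictly negative for $t$ small enough; combining with the upper tail yields convergence in probability. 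The only mild obstacle is the asymmetry in the hypothesis, resolved by using the explicit constant $C_\delta$ on the positive side and the Peano form of Taylor's theorem at $0$ on the negative side.
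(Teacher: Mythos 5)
Your proposal is correct and follows the same Chernoff/Markov-plus-Taylor route as the paper's proof. In fact you are slightly more careful than the paper on one point: the paper proves only the upper tail and then states ``the lower bound works in the same way,'' but the quantitative hypothesis $\sup_{t\in[0,\delta]}|\phi''(t)|\leq C_\delta$ is one-sided, so the identical Taylor-with-Lagrange-remainder estimate does not directly apply to $\phi(-t)$. You correctly notice this asymmetry and resolve it by switching to the Peano form of Taylor's theorem at $0$ (which twice differentiability of $\phi$ at $0$ does provide), obtaining $\phi(-t)+t\phi'(0)=\tfrac{t^2}{2}\phi''(0)+o(t^2)$ and hence a strictly negative exponent for small $t>0$. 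So both arguments are the same in spirit, but your write-up supplies a detail the paper elides; the paper's phrasing is most naturally read as implicitly assuming the bound on $\phi''$ holds on $[-\delta,\delta]$, which is also harmless for the application.
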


\begin{proof} 
We use the Markov inequality. Fix any $\varepsilon>0$. Then, for all $t>0$,
	\begin{equation}
	\label{eqntuedec20454pm}
	\begin{aligned}
	&\mathbb{P}\left(\frac{X_n}{n}\geq \phi'(0)+\varepsilon\right) 
	= \mathbb{P}\left(t\frac{X_n}{n}\geq t\phi'(0)+t\varepsilon\right) \\
	&=\mathbb{P}\left(\exp(tX_n)\geq\exp( tn\phi'(0)+ tn\varepsilon)\right) \\
	&\leq \exp(-tn\phi'(0)-tn\varepsilon)\,\mathbb{E}\left[\e^{tX_n}\right].
	\end{aligned}
	\end{equation}
By \eqref{eqn:tuedec20456pm},
	\begin{equation}
	\limsup_{n\rightarrow \infty}\frac{1}{n}\log\mathbb{P}\left(\frac{X_n}{n}\geq \phi'(0)+\varepsilon\right)\leq -t\phi'(0)-t\varepsilon + \phi(t). 
	\end{equation}
Expanding $\phi$, we get, for $t\in [0,\delta]$,
	\begin{equation}
	\limsup_{n\rightarrow \infty}\frac{1}{n}\log\mathbb{P}\left(\frac{X_n}{n}\geq \phi'(0)+\varepsilon\right)
	\leq -t\varepsilon + {\tfrac12} C_{\delta}t^2. 
	\end{equation}
Now we can choose $t$ small enough to complete the proof of the upper bound. The lower bound works in the same way. 
\end{proof}

The following lemma gives us control over the number of edges in $G_n$ given $V_T(G_n)$:

\begin{lem}[Conditional edge moment generating function]
\label{lem:tuedec201245pm}
For every $a \in (0,1]$ and $t\in \mathbb{R}$,
	\begin{equation}
	\lim_{n\rightarrow \infty} \frac{1}{n} \log\mathbb{E}\big[\e^{te(G_n)}~|~V_T(G_n)=an\big]
	= \tfrac{1}{2}\lambda (\e^t-1)+at.
	\end{equation}
\end{lem}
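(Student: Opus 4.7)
The plan is to use an exponential tilt in the edge probability, which reduces the conditional moment generating function to a ratio of large-deviation probabilities to which Theorem \ref{thm:ldp_vertices_in_triangle} can be applied. For any graph $G$ on $[n]$ one has the identity
	\begin{equation*}
	\e^{te(G)}\, p_n^{e(G)} (1-p_n)^{\binom{n}{2}-e(G)}
	= \bigl(1 + p_n(\e^t-1)\bigr)^{\binom{n}{2}} \, \tilde p_n^{e(G)} (1-\tilde p_n)^{\binom{n}{2}-e(G)},
	\end{equation*}
where $\tilde p_n = p_n \e^t / [1 + p_n(\e^t-1)]$ satisfies $n \tilde p_n \to \lambda \e^t$. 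Writing $\tilde G_n$ for the \erdos{} with parameter $\tilde p_n$ and summing over graphs with $V_T(G)=an$ gives
	\begin{equation*}
	\mathbb{E}\bigl[\e^{t e(G_n)}\, \oneindic_{\{V_T(G_n) = an\}}\bigr]
	= \bigl(1 + p_n(\e^t-1)\bigr)^{\binom{n}{2}} \, \mathbb{P}\bigl(V_T(\tilde G_n) = an\bigr),
	\end{equation*}
so the conditional MGF factors as the normalization constant above times the ratio $\mathbb{P}(V_T(\tilde G_n) = an)/\mathbb{P}(V_T(G_n) = an)$.

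The normalization expands via $\log(1+x) = x + O(x^2)$ for small $x$, giving
	\begin{equation*}
	\binom{n}{2}\log\bigl(1 + p_n(\e^t-1)\bigr) = \tfrac12 \lambda(\e^t-1)\, n + O(1).
	\end{equation*}
For the two probabilities in the ratio, the plan is to invoke Theorem \ref{thm:ldp_vertices_in_triangle} (together with the remark following it, which asserts that the same estimate holds for the event $\{V_T(\cdot)=an\}$ as for $\{V_T(\cdot)\geq an\}$) applied with parameters $\lambda$ and $n\tilde p_n \to \lambda \e^t$ respectively. All the terms in \eqref{eqn:monfeb21427pm} then cancel in the ratio except the one involving $\log \lambda$, yielding
	\begin{equation*}
	\log\frac{\mathbb{P}(V_T(\tilde G_n) = an)}{\mathbb{P}(V_T(G_n) = an)} = an \log \e^t + o(n^{19/20}) = atn + o(n).
	\end{equation*}

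Combining the two estimates and dividing through by $n$ delivers the advertised limit $\tfrac12 \lambda (\e^t-1) + at$. The main technical obstacle is ensuring that Theorem \ref{thm:ldp_vertices_in_triangle} is robust enough to handle the tilted edge probability $\tilde p_n$, which equals $\lambda\e^t/n$ only up to an $O(1/n^2)$ correction: one needs to trace through the arguments of Sections \ref{sec:ub} and \ref{sec:lb} and verify that replacing the constant $np_n = \lambda$ by $n\tilde p_n = \lambda\e^t + O(1/n)$ introduces only an $o(n)$ error in the log-probability, which is comfortably absorbed into the $o(n^{19/20})$ term. A secondary nuisance is integrality (only values $q = an$ with $3 \mid q$ can be realised by disjoint triangles), but these corrections are invisible at the $o(n)$ scale relevant here.
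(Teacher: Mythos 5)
Your proposal is correct and takes essentially the same approach as the paper. You identify the same exponential tilt (your $1+p_n(\e^t-1)$ and $\tilde p_n$ are exactly the paper's $\phi_n(t)$ and $p_n(t)$), arrive at the same factorization of $\mathbb{E}[\e^{te(G_n)}\indic{V_T(G_n)=an}]$ into the normalization $(\phi_n(t))^{\binom{n}{2}}$ times a tilted probability, and extract the ratio $\mathbb{P}_{\tilde p_n}(V_T=an)/\mathbb{P}_{p_n}(V_T=an)\approx (\tilde p_n/p_n)^{an}$ from Theorem \ref{thm:ldp_vertices_in_triangle} and the remark following it. Your explicit caveat that $n\tilde p_n=\lambda\e^t+O(1/n)$ rather than exactly $\lambda\e^t$, and that one must check the error term in the theorem is uniform enough to tolerate this perturbation, is a genuine point that the paper handles only implicitly in its bound \eqref{eqn:extra}; making it explicit is a slight improvement in rigor, not a different proof.
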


\begin{proof} 
For fixed $t \in \mathbb{R}$, compute 
	\begin{equation}
	\begin{aligned}
	\mathbb{E}\left[\e^{te(G_n)}\indic{V_T(G_n)=an}\right]
	= \sum_{G\colon V_T(G)=an} \e^{te(G)}\, p_n^{e(G)}(1-p_n)^{{n\choose 2}-e(G)}.
	\end{aligned}
	\end{equation}
We can rewrite this relation as 
	\begin{equation}
	\begin{aligned}
	\mathbb{E}\big[\e^{te(G_n)}\indic{V_T(G_n)=an}\big] = \left(\phi_n(t)\right)^{{n\choose 2}}\
	\sum_{G\colon V_T(G)=an}  {p_n(t)}^{e(G)}(1-p_n(t))^{{n\choose 2}-e(G)},
	\end{aligned}
	\end{equation}
where 
	\begin{equation}
	\phi_n(t) = 1-p_n+\e^tp_n, \qquad p_n(t)=\frac{\e^tp_n}{1-p_n+\e^tp_n}.
	\end{equation}
Hence
	\begin{equation}
	\label{eqn:tuedev201115am}
	\mathbb{E}\big[\e^{te(G_n)}\indic{V_T(G_n)=an}\big]
	= \left(\phi_n(t)\right)^{{n\choose 2}} \mathbb{P}_{p_n(t)}(V_T(G_n)=an).
	\end{equation}
Using \eqref{eqn:tuedev201115am}, we get
	\begin{equation}
	\label{eqn:tuedec201155am}
	\mathbb{E}\big[\e^{te(G_n)}~|~V_T(G_n)=an\big]
	= \frac{\mathbb{E}\left[\exp(te(G_n))\indic{V_T(G_n)=an}\right]}{\mathbb{P}(V_T(G_n)=an)} 
	= \left(\phi_n(t)\right)^{{n\choose 2}}\frac{\mathbb{P}_{p_n(t)}(V_T(G_n)=an)}{\mathbb{P}(V_T(G_n)=an)}.
	\end{equation}
At this point we use the estimate from \eqref{eqn:thudec221128pm}, to get
	\begin{equation}
	\label{eqn:extra}
	\left(\frac{p_n(t)}{p_n}\right)^{an-(an)^c}\leq \frac{\mathbb{P}_{p_n(t)}(V_T(G_n)=an)}{\mathbb{P}(V_T(G_n)=an)}
	\leq \left(\frac{p_n(t)}{p_n}\right)^{an+(an)^{9/10}}.
	\end{equation}
Combining \eqref{eqn:tuedec201155am}--\eqref{eqn:extra}, we get the claim.
\end{proof}

\begin{proof}[Proof of Theorem \ref{thm-conc-edges}]
We use Lemma \ref{lemma:tuedec20434pm} with $X_n = e(G_n)$ conditionally on $V_T(G_n)=an$. In this case $\phi(t)=\frac{1}{2}\lambda (\e^t-1)+at$, which satisfies the assumptions in Lemma \ref{lemma:tuedec20434pm}.
\end{proof}

From Lemma \ref{lemma:tuedec20434pm}, we can also conclude that the number of edges, conditionally on $V_T(G_n)=an$, satisfied a large deviation principle:
\begin{cor}[LDP for the number of edges]
\label{cor-LDP-edges}
Conditionally on $V_T(G_n)=an$, the number of edges $e(G_n)$ satisfies a large deviation principle with rate $n$ and with good rate function
	\eqn{
	I_\lambda(x)=(x-a)\log\Big(\frac{x-a}{\lambda/2}\Big)-(x-a)+1.
	}
\end{cor}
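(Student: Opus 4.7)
The plan is to invoke the Gärtner--Ellis theorem, using the pointwise limit of the normalised cumulant generating function already provided by Lemma \ref{lem:tuedec201245pm}. Set
\begin{equation*}
\Lambda(t) := \lim_{n\to\infty}\frac{1}{n}\log\mathbb{E}\big[\e^{te(G_n)}\,\big|\,V_T(G_n)=an\big]=\tfrac{1}{2}\lambda(\e^t-1)+at,\qquad t\in\mathbb{R}.
\end{equation*}
The task then reduces to verifying the hypotheses of Gärtner--Ellis for $\Lambda$ and computing its Legendre transform.

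For the verification, note that $\Lambda$ is finite on all of $\mathbb{R}$, strictly convex, of class $C^\infty$, and essentially smooth, since $\Lambda'(t)=\tfrac{\lambda}{2}\e^t+a$ blows up as $t\to\infty$ and the effective domain is open. Consequently, Gärtner--Ellis yields an LDP at speed $n$ for $e(G_n)/n$, conditionally on $V_T(G_n)=an$, with good rate function $I_\lambda:=\Lambda^*$. Goodness follows from the superlinear growth of $\Lambda^*$, which gives compact level sets.

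For the Legendre transform, solving $\Lambda'(t)=x$ gives $t^\ast=\log\frac{2(x-a)}{\lambda}$ when $x>a$, while for $x<a$ the supremum equals $+\infty$. Substituting $t^\ast$ into $tx-\Lambda(t)$ yields
\begin{equation*}
\Lambda^*(x)=(x-a)\log\frac{x-a}{\lambda/2}-(x-a)+\frac{\lambda}{2}\qquad (x\geq a),
\end{equation*}
which is (up to the constant) the announced $I_\lambda$. Observe that $\Lambda^*$ vanishes precisely at $x=a+\lambda/2$, matching $\Lambda'(0)$ and the concentration result of Theorem \ref{thm-conc-edges}, which serves as a consistency check.

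There is no real obstacle here: all conditions of Gärtner--Ellis hold because $\Lambda$ is an affine perturbation of a (scaled) Poisson log-MGF, the smoothest possible setting. The only conceptual subtlety is the restriction $x\geq a$, which reflects the combinatorial constraint $e(G_n)\geq V_T(G_n)$ already implicit in the disjoint-triangle picture furnished by Theorem \ref{structure:disjoint triangles}: the minimum number of edges compatible with $an$ vertices in triangles is $an$, attained by $an/3$ disjoint triangles, and this boundary value is exactly where $I_\lambda$ would cease to be finite.
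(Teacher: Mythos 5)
Your proof follows exactly the paper's own route: invoke G\"artner--Ellis on the limiting conditional cumulant generating function supplied by Lemma \ref{lem:tuedec201245pm}, check essential smoothness (automatic here since $\Lambda$ is finite and differentiable on all of $\mathbb{R}$), and compute the Legendre transform. Your explicit computation is correct and in fact reveals a typo in the statement of Corollary \ref{cor-LDP-edges}: the additive constant in the rate function should be $\lambda/2$, not $1$, since
\begin{equation*}
\Lambda^*(x)=(x-a)\log\frac{x-a}{\lambda/2}-(x-a)+\frac{\lambda}{2},
\end{equation*}
and only with the constant $\lambda/2$ does $I_\lambda$ vanish at the concentration point $x=a+\lambda/2$ identified in Theorem \ref{thm-conc-edges} (the paper's $+1$ would make $I_\lambda(a+\lambda/2)=1-\lambda/2\neq 0$ for $\lambda\neq 2$). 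You were right to flag this.
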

The rate function $x\mapsto I_\lambda(x)$ is the large deviation rate function of $na+e(G_n)$, where now $G_n$ is an {\em unconditional} \erdos{}. This suggests that the conditioning on $V_T(G_n)=an$ simply adds $\tfrac13 an$ vertex-disjoint triangles, and the remainder of the graph still has that every edge is present independently with probability $p_n=\lambda/n$.

\proof 
The claim follows directly from the G\"artner--Ellis Theorem (see \cite[Theorem 2.3.6]{DemZei98}).
\qed

%%%%%%%%%% SECTION 3 %%%%%%%%%%%%%%%%%%%%%%%%%%

\section{Exponential random graph models}
\label{sec-ERGs}

In this section we consider exponential random graph models based on the number of vertices in triangles. In Section~\ref{sec-res-ERGs}, we discuss our main results, which are of two types. In the first, we bias by {$\log n$} times the number of vertices in triangles (a setting which we call ``linear tilting''). In the second, we bias by $n \log n$ times a function of the number of vertices in triangles (a setting which we call ``functional tilting''). We investigate the asymptotics of the partition function, as well as the number of vertices in triangles and the number of edges, arising in the exponential random graph. In Section \ref{sec:exptriangles1}, we prove our main results for the linear tilting; in Section~\ref{sec:exptriangles2} for the functional tilting.

%%%

\subsection{Results for exponential random graphs}
\label{sec-res-ERGs}

%%%

\subsubsection{Linear tilting}

Consider the exponential random graph model $\mathbb{P}_\beta$ defined by 
	\begin{equation}
	\label{eqn:250pm15feb22}
	\frac{d\mathbb{P}_\beta}{d\mathbb{P}} = \frac{1}{Z_n(\beta)}\, \mathrm{e}^{\beta \log{n} V_{T}(G_n)},
	\end{equation}
where $\mathbb{P}$ is the measure of the \erdos{} with $p_n=\lambda/n$. This model was investigated in \cite{ChaHofHol21}, where it was proved that 
$$
V_T(G_n)/n\convpbeta 0 \text{ for } \beta<\tfrac{1}{3}, \qquad
V_T(G_n)/n\convpbeta 1 \text{ for } \beta>\tfrac{1}{3}.
$$ 
We investigate the critical window of this phase transition, for which we take
	\eqn{
	\label{beta-choice-linear}
	\beta=\frac{1}{3}+\frac{\theta}{\log n}.
	}
It turns out that for this choice of parameter, the number of vertices in triangles concentrates on a non-trivial value $a^\star \in (0,1)$ that depends on $\theta$:

\begin{thm}[Vertex-in-triangles exponential random graphs]
\label{thm-linear-tilting-ERG}
Consider the exponential random graph in \eqref{eqn:250pm15feb22}, with $\beta=\tfrac13+\frac{\theta}{\log n}$. Then, for all $\theta\in {\mathbb R}$,
	\begin{equation}
	\label{universality:ergm-linear}
	\lim_{n\rightarrow \infty} \frac{1}{n}\log{Z_n(\beta)} = \max_{a\in[0,1]} \Lambda(a,\theta,\lambda),
	\end{equation}
with 
	\begin{equation}
	\Lambda(a,\theta,\lambda) = \theta a - (1-a) \log(1-a) - (\tfrac13 a) \log(\tfrac13 a)
	-\tfrac23 a -\tfrac13 a \log{6} + a\log{\lambda}.
	\end{equation}
Furthermore,
	\begin{equation}
	\label{eqn:wed20303pm}
	\frac{V_T(G_n)}{n}\convpbeta a^\star,
	\end{equation}
where $a^\star$ is the unique maximizer of the variational problem in \eqref{universality:ergm-linear}, and
	\begin{equation}
	\label{eqn:wed20304pm}
	\frac{E(G_n)}{n}\convpbeta \frac{\lambda}{2}+a^\star.
	\end{equation}
\end{thm}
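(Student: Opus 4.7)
The plan is to extract the partition-function asymptotic \eqref{universality:ergm-linear} from Theorem \ref{thm:ldp_vertices_in_triangle} via a Laplace computation, deduce the concentration \eqref{eqn:wed20303pm} from strict concavity of $a\mapsto\Lambda(a,\theta,\lambda)$, and then transfer concentration to the edge count using Theorem \ref{thm-conc-edges}.

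First I would write
\begin{equation*}
Z_n(\beta) = \sum_{q=0}^{n}\mathbb{P}(V_T(G_n) = q)\, \e^{\beta q \log n},
\end{equation*}
and apply Theorem \ref{thm:ldp_vertices_in_triangle} (together with the remark following it, which guarantees that \eqref{eqn:monfeb21426pm} holds also for the atom $\{V_T=q\}$) to each summand with $q=an$. With $\beta\log n = \tfrac13\log n + \theta$, the explicit $\log n$ contribution of $\beta q \log n$ cancels the $-(an/3)\log n$ piece that appears upon expanding $-(an/3)\log(an/3) = -(an/3)\log(a/3) - (an/3)\log n$ inside the rate function in \eqref{eqn:monfeb21427pm}. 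What survives in the exponent is precisely $n\,\Lambda(a,\theta,\lambda) + o(n)$, so because the sum has at most $n+1$ terms the standard Laplace principle yields \eqref{universality:ergm-linear}.

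For \eqref{eqn:wed20303pm}, I would verify strict concavity by computing
\begin{equation*}
\partial_a\Lambda(a,\theta,\lambda) = \theta + \log(1-a) - \tfrac13\log(2a) + \log\lambda, \qquad \partial_a^2\Lambda(a,\theta,\lambda) = -\frac{1}{1-a}-\frac{1}{3a} < 0,
\end{equation*}
and noting $\partial_a\Lambda\to+\infty$ as $a\downarrow 0$ and $\partial_a\Lambda\to-\infty$ as $a\uparrow 1$, so the maximizer $a^\star$ is interior and unique. Strict concavity then produces a positive gap between $\Lambda(a^\star,\theta,\lambda)$ and $\max_{|a-a^\star|\geq\varepsilon}\Lambda(a,\theta,\lambda)$, and the same Laplace estimate bounds
\begin{equation*}
\mathbb{P}_\beta\!\left(|V_T(G_n)/n - a^\star|>\varepsilon\right) = \frac{\sum_{q:\,|q/n-a^\star|>\varepsilon}\mathbb{P}(V_T(G_n)=q)\,\e^{\beta q\log n}}{Z_n(\beta)}
\end{equation*}
by $\exp(-cn+o(n))$ for some $c=c(\varepsilon)>0$.

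For \eqref{eqn:wed20304pm}, I would use that $d\mathbb{P}_\beta/d\mathbb{P}$ is a function of $V_T(G_n)$ alone, so the conditional law of $G_n$ given $V_T(G_n)=q$ coincides under $\mathbb{P}_\beta$ and $\mathbb{P}$. Decomposing
\begin{equation*}
\mathbb{P}_\beta\!\left(\left|\tfrac{E(G_n)}{n}-\tfrac{\lambda}{2}-a^\star\right|>\varepsilon\right) \leq \mathbb{P}_\beta\!\left(|V_T(G_n)/n - a^\star|>\tfrac{\varepsilon}{2}\right) + \sup_{|a-a^\star|\leq \varepsilon/2}\mathbb{P}\!\left(\left|\tfrac{E(G_n)}{n}-\tfrac{\lambda}{2}-a\right|>\tfrac{\varepsilon}{2}\,\Big|\,V_T(G_n)=an\right),
\end{equation*}
the first term tends to zero by the preceding step and the second by Theorem \ref{thm-conc-edges}. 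The hard part will be establishing \emph{uniformity} in $a$ of the conditional edge concentration over the neighbourhood $|a-a^\star|\leq\varepsilon/2$, since Theorem \ref{thm-conc-edges} is stated pointwise. This should follow by noticing that the cumulant generating function in Lemma \ref{lem:tuedec201245pm} is jointly continuous in $(a,t)$ with a uniformly bounded second $t$-derivative on any compact product, so a compactness argument upgrades the pointwise conclusion of Lemma \ref{lemma:tuedec20434pm}. All remaining bookkeeping is routine, as the error $o(n^{19/20})$ in \eqref{eqn:monfeb21426pm} is negligible against the scale $n$.
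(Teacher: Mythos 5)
Your treatment of \eqref{universality:ergm-linear} and \eqref{eqn:wed20303pm} coincides with the paper's: both write $Z_n(\beta)=\sum_q \mathbb{P}(V_T(G_n)=q)\,\e^{\beta q\log n}$, feed in Theorem~\ref{thm:ldp_vertices_in_triangle}, observe the cancellation of the $\tfrac13\log n$ terms, and take a Laplace maximum; and both then use the strictly concave spectral gap $\delta$ between $\Lambda(a^\star,\theta,\lambda)$ and $\sup_{|a-a^\star|\ge\varepsilon}\Lambda(a,\theta,\lambda)$ to control $\mathbb{P}_\beta(|V_T(G_n)/n-a^\star|>\varepsilon)$. Your computation of $\partial_a\Lambda$ and $\partial_a^2\Lambda$ is correct (the paper only writes down the first-derivative condition \eqref{eqn:weddec201227pm}; your simplification $-\tfrac13\log(\tfrac13 a)-\tfrac13\log 6=-\tfrac13\log(2a)$ agrees).

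For \eqref{eqn:wed20304pm} you take a genuinely different route. The paper computes the \emph{unconditional} moment generating function $\mathbb{E}_\beta[\e^{te(G_n)}]$ directly by absorbing the factor $\e^{te(G_n)}$ into a tilted Erd\H{o}s--R\'enyi measure, which reduces to ratios of partition functions $Z_n(g)$ at shifted $\lambda$; this requires the envelope-type Lemma~\ref{techlemma:weddec20200pm} to differentiate $t\mapsto\max_a\Lambda(a,\theta+t,\lambda)$ at $t=0$ and get $a^\star$, before invoking Lemma~\ref{lemma:tuedec20434pm}. You instead condition on $V_T(G_n)$, use the sufficiency of $V_T$ (i.e.\ that $d\mathbb{P}_\beta/d\mathbb{P}$ depends only on $V_T$, so the conditional law is unchanged), and apply Theorem~\ref{thm-conc-edges} together with the already-established concentration of $V_T/n$. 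This is cleaner in that it avoids Lemma~\ref{techlemma:weddec20200pm} entirely and makes the reuse of the earlier concentration result explicit, but --- as you rightly flag --- it requires the conditional concentration to hold \emph{uniformly} over $q/n$ in a neighbourhood of $a^\star$, whereas Theorem~\ref{thm-conc-edges} is stated pointwise in $a$. Your sketch of the fix is essentially correct: the sandwich \eqref{eqn:extra} in the proof of Lemma~\ref{lem:tuedec201245pm} comes from the estimate \eqref{eqn:thudec221128pm} whose $Cq^{9/10}\log q$ error is uniform in $q$, the limiting cumulant generating function $\phi(t)=\tfrac12\lambda(\e^t-1)+at$ has second derivative independent of $a$, and hence the Chernoff bound in Lemma~\ref{lemma:tuedec20434pm} is uniform on compacts. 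With that noted, your argument closes the gap and is a valid alternative to the paper's MGF-of-$e(G_n)$ computation.
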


\subsubsection{Functional tilting}
In this section, we extend Theorem \ref{thm-linear-tilting-ERG} to exponential random graphs based on the number of vertices in triangles. Pick a function $g\colon\,[0,1]\rightarrow \mathbb{R}$. Consider the exponential random graph model $\mathbb{P}_g$ defined by 
	\begin{equation}
	\label{eqn:250pm30mar22}
	\frac{d\mathbb{P}_g}{d\mathbb{P}} = \frac{1}{Z_n(g)}\, \mathrm{e}^{ n\log{n} g\left(V_{ T}(G_n)/n\right)},
	\end{equation}
where $\mathbb{P}$ is as before. We show that, under some mild assumptions on $g$, we can asymptotically evaluate the normalizing constant $Z_n(g)$, as well as the number of vertices in triangles and the number of edges:

\begin{thm}[Functional exponential random graphs]
\label{thm-func-tilting-ERG}
Let $g\colon\,[0,1]\rightarrow \mathbb{R}$ be a function such that all maxima of $a\mapsto g(a)-\tfrac13 a$ are attained in $(0,1)$. Let $a^\star \in (0,1)$ be any of the maximizers. Then
	\begin{equation}
	\label{universality:ergm}
	\lim_{n\rightarrow \infty} \frac{1}{n\log{n}}\log{Z_n(g)} = g(a^\star)-\tfrac13 a^\star.
	\end{equation}
Further, if the maximum of $g(a)-\tfrac13 a$ is {\em uniquely} attained at some $a^\star \in (0,1)$, then
	\begin{equation}
	\label{eqn:weddec20332pm}
	\frac{V_T(G_n)}{n}\convpg a^\star,
	\end{equation}
while
	\begin{equation}
	\label{eqn:weddec20333pm}
	\frac{E(G_n)}{n}\convpg a^\star+\frac{\lambda}{2}.
	\end{equation}
\end{thm}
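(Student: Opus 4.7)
\medskip

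\noindent
\textbf{Proof proposal for Theorem \ref{thm-func-tilting-ERG}.}
The plan is to reduce the functional tilting to the large deviation estimate of Theorem \ref{thm:ldp_vertices_in_triangle}, via a Laplace/Varadhan-type argument at the $n\log n$ scale, and then invoke Theorem \ref{thm-conc-edges} to transfer the concentration from $V_T(G_n)$ to $E(G_n)$.

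First, I would write
\begin{equation*}
Z_n(g) = \sum_{k=0}^n \mathrm{e}^{n\log n\, g(k/n)}\, \mathbb{P}(V_T(G_n)=k).
\end{equation*}
The key computation is to expand the right-hand side of \eqref{eqn:monfeb21426pm} using Stirling: the factor $n(n-1)\cdots(n-k+1)\,p_n^k$ contributes a cancellation $k\log n - k\log n = 0$, whereas $1/((3!)^{k/3}(k/3)!)$ contributes $-\tfrac{1}{3}k\log n + O(n)$. Hence, for $k = an$ with $a$ bounded away from $0$,
\begin{equation*}
\log\mathbb{P}(V_T(G_n)=k) = -\tfrac{1}{3} k \log n + O(n) + o(n^{19/20}),
\end{equation*}
so each summand behaves like $\exp\bigl(n\log n\,[g(k/n) - \tfrac{1}{3}(k/n)] + O(n)\bigr)$. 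For the upper bound on $Z_n(g)$, I would split the sum into (i) $k \geq Cn^{19/20}$, where the above estimate applies uniformly, yielding the bound $\mathrm{e}^{n\log n\,\max_a(g(a)-a/3) + O(n)}$, and (ii) $k < Cn^{19/20}$, where $g(k/n) \to g(0)$ by continuity of $g$ and the contribution is at most $\mathrm{e}^{n\log n\,g(0)+o(n\log n)}$; by the hypothesis that every maximizer of $g(a)-a/3$ lies in $(0,1)$, this is strictly dominated. For the lower bound, I would pick $k^\star$ a multiple of $3$ with $k^\star/n \to a^\star$ and keep just one term, using \eqref{eqn:244pm02may22} from the proof of the lower bound in Theorem \ref{thm:ldp_vertices_in_triangle}. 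Together these give \eqref{universality:ergm}.

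For the concentration statement \eqref{eqn:weddec20332pm}, assume the maximum is uniquely attained at $a^\star$. For any $\varepsilon>0$, the sum defining $Z_n(g)$ restricted to $|k/n - a^\star|\geq \varepsilon$ is bounded above by $\mathrm{e}^{n\log n\,[\max_{|a-a^\star|\geq\varepsilon}(g(a)-a/3)] + o(n\log n)}$, which is exponentially (on the $n\log n$ scale) smaller than $Z_n(g)$ by uniqueness and continuity; dividing gives $\mathbb{P}_g(|V_T(G_n)/n - a^\star|\geq\varepsilon) \to 0$. For the edge statement \eqref{eqn:weddec20333pm}, the crucial observation is that the Radon--Nikodym derivative $d\mathbb{P}_g/d\mathbb{P}$ depends on $G_n$ only through $V_T(G_n)$, so the conditional distribution of $G_n$ given $V_T(G_n)=k$ is the same under $\mathbb{P}_g$ as under $\mathbb{P}$. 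Combined with \eqref{eqn:weddec20332pm} and Theorem \ref{thm-conc-edges}, this yields $E(G_n)/n \convpg a^\star + \lambda/2$.

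The main obstacle is the handling of the small-$k$ regime in the partition function sum, where Theorem \ref{thm:ldp_vertices_in_triangle} is not directly informative. The hypothesis that the maximum of $g(a) - a/3$ is attained in $(0,1)$ is precisely what is needed to rule out any pathological contribution from $k$ near $0$ or $n$; near $k=n$, one uses both that $g$ is bounded (implicitly, via the assumption) and that $\mathbb{P}(V_T(G_n)=n)$ itself contributes a factor with rate function blowing up as $a\uparrow 1$ through the $-(1-a)\log(1-a)$ term at the $n$-scale. A minor technical point is that divisibility by $3$ in the lower bound should be handled by choosing $k^\star$ to be a nearby multiple of $3$, which changes $g(k^\star/n)$ only by $o(1)$ at the $n\log n$ scale.
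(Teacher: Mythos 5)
Your treatment of \eqref{universality:ergm} and \eqref{eqn:weddec20332pm} mirrors the paper's Lemma \ref{prop:thumar301259pm}: substitute the second-order estimate from Theorem \ref{thm:ldp_vertices_in_triangle} into the sum defining $Z_n(g)$, pull out $\exp(n\log n\,(g(a)-\tfrac13 a)+O(n))$, and use that the maximizer lies in the interior to kill the boundary and to localize. Your argument for \eqref{eqn:weddec20333pm}, however, is genuinely different from the paper's. The paper (Lemma \ref{lem-edge-MGF-functional}) computes $\lim_n\tfrac1n\log\mathbb{E}_g[\e^{te(G_n)}]$ directly via the exponential tilting $p_n\mapsto p_n(t)$, together with the auxiliary Lemma \ref{lem:may31121am}, and then applies Lemma \ref{lemma:tuedec20434pm}; that route is self-contained and has the side benefit of immediately giving the edge LDP of Remark \ref{rem-LDP-edges}. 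You instead note that $d\mathbb{P}_g/d\mathbb{P}$ is a function of $V_T(G_n)$ alone, so the conditional law of $G_n$ given $V_T(G_n)=k$ is unchanged by the tilt, and then combine \eqref{eqn:weddec20332pm} with Theorem \ref{thm-conc-edges}. This is cleaner and exposes the mechanism more transparently. Two points to tighten: (a) you need Theorem \ref{thm-conc-edges} to hold \emph{uniformly} in $k$ over a window $|k/n-a^\star|\le\varepsilon$, whereas the paper states it only for a single fixed $a$; the proof via Lemma \ref{lem:tuedec201245pm} does deliver this uniformity (the error bounds in \eqref{eqn:extra} are uniform for $a$ in a compact subinterval of $(0,1)$), but you should say so. (b) Your remark that the $-(1-a)\log(1-a)$ term ``blows up'' as $a\uparrow 1$ is wrong; that term vanishes in that limit. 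The $a$-near-$1$ contribution is excluded purely because the maximizer of $g(a)-\tfrac13 a$ lies in the open interval, so it is dominated at the $n\log n$ scale, and no help from the $n$-scale term is needed or available.
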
 

Here is a concrete example of a function $g$ for which Theorem \ref{thm-func-tilting-ERG} applies.

\begin{cor}[Example of functional tilting]
\label{cor:weddec20308pm}
Let $g(x)= \beta x^{\alpha}$ for some $\alpha \in (0,1)$, $\beta>0$, and $3\alpha \beta < 1$. Then
\begin{equation}
\label{eqn:thumar30107pm}
\lim_{n\rightarrow \infty}\frac{1}{n}\log{Z_n(g)}
= \beta (3\alpha \beta)^{-\frac{\alpha}{\alpha-1}} - \tfrac{1}{3}(3\alpha \beta)^{-\frac{1}{\alpha-1}}.
\end{equation}
\end{cor}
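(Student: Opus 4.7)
The plan is to apply Theorem \ref{thm-func-tilting-ERG} to $g(x)=\beta x^{\alpha}$, so the only work is to verify its hypothesis and explicitly solve the associated variational problem. Set $h(a):=g(a)-\tfrac{1}{3}a=\beta a^{\alpha}-\tfrac{1}{3}a$ on $[0,1]$; I need to show that $h$ attains its maximum at a unique point in the open interval $(0,1)$, and then compute that maximum.

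For the critical point, compute $h'(a)=\alpha\beta a^{\alpha-1}-\tfrac{1}{3}$. Since $\alpha\in(0,1)$, the term $a^{\alpha-1}$ blows up as $a\downarrow 0$, so $h'(0^+)=+\infty$ and $h$ is strictly increasing near $0$; in particular, $h(0)=0$ is not a maximum. Setting $h'(a^{\star})=0$ yields
\begin{equation*}
a^{\star} \;=\; (3\alpha\beta)^{1/(1-\alpha)}.
\end{equation*}
The hypothesis $3\alpha\beta<1$, together with $1-\alpha>0$, gives $a^{\star}<1$, and trivially $a^{\star}>0$, so $a^{\star}\in(0,1)$. Strict concavity of $h$ on $(0,\infty)$ follows from $h''(a)=\alpha(\alpha-1)\beta a^{\alpha-2}<0$ (using $\alpha<1$); hence $a^{\star}$ is the unique global maximizer of $h$ on $[0,1]$, which verifies the hypothesis of Theorem \ref{thm-func-tilting-ERG}.

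To finish, substitute $a^{\star}$ back into $h$:
\begin{align*}
h(a^{\star})
&= \beta (a^{\star})^{\alpha}-\tfrac{1}{3}a^{\star}\\
&= \beta (3\alpha\beta)^{\alpha/(1-\alpha)}-\tfrac{1}{3}(3\alpha\beta)^{1/(1-\alpha)}\\
&= \beta (3\alpha\beta)^{-\alpha/(\alpha-1)}-\tfrac{1}{3}(3\alpha\beta)^{-1/(\alpha-1)},
\end{align*}
which matches the right-hand side of \eqref{eqn:thumar30107pm}. Applying \eqref{universality:ergm} in Theorem \ref{thm-func-tilting-ERG} then yields the claim. I do not foresee a real obstacle here: the corollary reduces to a one-dimensional, strictly concave optimization solved by a single calculus computation, and the parameter condition $3\alpha\beta<1$ is exactly what is needed to keep the critical point inside $(0,1)$ so that the hypothesis of Theorem \ref{thm-func-tilting-ERG} is met.
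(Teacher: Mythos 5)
Your proof is correct and follows the same route as the paper: check that $a\mapsto g(a)-\tfrac13 a$ is strictly concave with critical point $a^\star=(3\alpha\beta)^{1/(1-\alpha)}\in(0,1)$ under the condition $3\alpha\beta<1$, then invoke Theorem \ref{thm-func-tilting-ERG} and substitute. You are merely more explicit than the paper (which stops at noting concavity), and your use of the $\tfrac{1}{n\log n}$ normalization from \eqref{universality:ergm} indicates that the $\tfrac1n$ appearing in the corollary's display \eqref{eqn:thumar30107pm} is a typo.
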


\begin{proof}
Let $G(x)= g(x)-\tfrac13 x$. Note that $G^{'}(x)= \beta \alpha x^{\alpha -1} -\frac{1}{3}$ and $G^{''}(x)= \beta \alpha (\alpha-1) x^{\alpha -2} $. Therefore $G$ attains its maximum in $(0,1)$ (since $3\alpha \beta <1$). Therefore the claim follows from Theorem \ref{thm-func-tilting-ERG}.
\end{proof}

\begin{rmk}[LDP for the number of edges]
\label{rem-LDP-edges}
{\rm In Theorems \ref{thm-linear-tilting-ERG} and \ref{thm-func-tilting-ERG}, we can extend the asymptotics of the number of edges to a large deviation principle, as in Corollary \ref{cor-LDP-edges}. The proof of this extension follows directly from the G\"artner--Ellis Theorem (see \cite[Theorem 2.3.6]{DemZei98}), and is therefore identical to the proof of Corollary \ref{cor-LDP-edges}.}
\end{rmk}

In the remainder of the section, we give the proofs of Theorems \ref{thm-linear-tilting-ERG} and \ref{thm-func-tilting-ERG}. The proofs are organised in the same way. First, we investigate the asymptotics of the partition function in terms of a variational problem. Afterwards, we conclude that the number of vertices in triangles divided by $n$ converges to the maximizer of this variational problem. Finally, we compute the moment generating function for the number of edges to conclude the proof.

%%%

\subsection{Linear tilting}
\label{sec:exptriangles1}

The asymptotics for partition function is identified in the following lemma:

\begin{lem}[Partition function for the linear tilting]
For every $\theta \in \mathbb{R}$,
	\begin{equation}
	\label{eqn:satoct71210pm}
	  \log Z_n\big(\tfrac{1}{3}+\tfrac{\theta}{\log{n}}\big) 
	 = n\max_{0 \leq a \leq 1} \Lambda(a,\theta,\lambda) +o(n^{19/20}).
	\end{equation}
\end{lem}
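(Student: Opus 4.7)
The strategy is to reduce the partition function to the sharp asymptotics for $\mathbb{P}(V_T(G_n)=q)$ proved in Theorem~\ref{thm:ldp_vertices_in_triangle} (taken in the pointwise form stated in the Remark after its proof, which asserts that the same estimate holds for $\mathbb{P}(V_T(G_n)=q)$ when $q \geq Cn^{19/20}$). First, I would expand
\begin{equation*}
Z_n\big(\tfrac{1}{3}+\tfrac{\theta}{\log n}\big)
= \sum_{q=0}^{n} \mathbb{P}(V_T(G_n)=q)\, \e^{(\tfrac13 \log n + \theta) q}.
\end{equation*}
For $q=an$ with $a \in (0,1)$ and $q \geq Cn^{19/20}$, substituting the estimate from Theorem~\ref{thm:ldp_vertices_in_triangle} and using the algebraic identity $(an/3)\log(an/3) = (an/3)\log(a/3) + (an/3)\log n$ shows that the factor $\e^{(q/3)\log n}$ from the tilt cancels exactly against the $\tfrac{an}{3}\log n$ contribution inside $-\tfrac13 q\log(q/3)$, yielding
\begin{equation*}
\log\big[\mathbb{P}(V_T(G_n)=q)\, \e^{(\tfrac13 \log n + \theta) q}\big] = n\Lambda(a,\theta,\lambda) + o(n^{19/20}).
\end{equation*}

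For the upper bound in \eqref{eqn:satoct71210pm}, I would use $Z_n(\beta) \leq (n+1)\max_{0 \leq q \leq n} \mathbb{P}(V_T(G_n)=q)\,\e^{(\tfrac13 \log n + \theta) q}$ and split the maximum into a bulk regime $q \in [Cn^{19/20}, n]$ and a tail regime $q < Cn^{19/20}$. In the bulk, the pointwise asymptotic above, together with uniformity in $q$ of the $Cq^{9/10}\log q$ error in Lemma~\ref{lem:thudec1402pm}, yields a maximum equal to $\exp(n\max_{a \in [0,1]} \Lambda(a,\theta,\lambda) + o(n^{19/20}))$. In the tail, the trivial bound $\mathbb{P}(V_T(G_n)=q) \leq 1$ together with $\e^{(\tfrac13\log n + \theta)q} \leq \e^{O(n^{19/20}\log n)} = \e^{o(n)}$ controls the contribution. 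Since $\Lambda(\cdot,\theta,\lambda)$ has derivative $+\infty$ at $a=0$ (from $-\tfrac13 a\log(\tfrac13 a)$) and $-\infty$ at $a=1$ (from $-(1-a)\log(1-a)$), its maximum is attained at some $a^\star \in (0,1)$ and is strictly positive; hence $n\max\Lambda$ is of order $n$, which dominates the $o(n)$ tail bound and leaves the overall error at the required $o(n^{19/20})$.

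The matching lower bound is obtained by retaining a single term with $q^\star = 3\lfloor a^\star n/3\rfloor$ (divisibility by $3$ allows direct use of the lower-bound construction from Section~\ref{sec:lb}):
\begin{equation*}
Z_n\big(\tfrac13+\tfrac{\theta}{\log n}\big) \geq \mathbb{P}(V_T(G_n)=q^\star)\, \e^{(\tfrac13 \log n + \theta) q^\star} = \exp\big(n\Lambda(a^\star,\theta,\lambda) + o(n^{19/20})\big),
\end{equation*}
which matches the desired maximum up to the allowed error.

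The main obstacle is establishing uniformity in $q$ of the $o(n^{19/20})$ error from Theorem~\ref{thm:ldp_vertices_in_triangle} across the bulk range, so that replacing the $q$-sum by its maximum does not introduce an excess error. This uniformity is implicit in the upper-bound argument of Theorem~\ref{thm:ldp_vertices_in_triangle}: the Stirling corrections and the $Cq^{9/10}\log q$ term from Lemma~\ref{lem:thudec1402pm} are each $o(n^{19/20})$ uniformly in $q \leq n$, and the variational characterization of the dominant $(\ell_1,\ell_2,\ell_{31},\ell_{32})$-configuration holds uniformly. Once this uniformity is recorded, the upper and lower bounds coincide at $n\max_{a \in [0,1]}\Lambda(a,\theta,\lambda) + o(n^{19/20})$, as required.
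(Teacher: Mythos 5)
Your proposal is correct and follows essentially the same route as the paper's proof: substitute the second-order estimate for $\mathbb{P}(V_T(G_n)=q)$ from Theorem~\ref{thm:ldp_vertices_in_triangle}, observe that the tilt $\e^{(\tfrac13\log n)q}$ exactly cancels the $-\tfrac13 q\log n$ part of the $-\tfrac13 q\log(q/3)$ term, reducing each summand to $\e^{n\Lambda(a,\theta,\lambda)+o(n^{19/20})}$, and then trade the sum over $q$ for a maximum at the cost of a polynomial factor whose log is $O(\log n)=o(n^{19/20})$. The paper compresses all of this into a single line (writing $\mathbb{E}=n\sup_a\exp(\cdots)$); you are more explicit about the step from sum to max and, usefully, about handling the range $q<Cn^{19/20}$ where the estimate of Theorem~\ref{thm:ldp_vertices_in_triangle} is not directly available, using $\mathbb{P}\leq 1$ and $\e^{(\tfrac13\log n+\theta)q}=\e^{o(n)}$ together with the observation that $\Lambda'(a)\to+\infty$ as $a\downarrow 0$ forces $\max_a\Lambda>0$, so the bulk dominates. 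This is a legitimate small gap in the paper's own writeup that your proof fills in; otherwise the two arguments coincide.
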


\begin{proof} 
By Theorem \ref{thm:ldp_vertices_in_triangle}, 
	\begin{equation}
	\label{eqn:606pm05apr222}
	\begin{aligned}
	&\log{\mathbb{P}(V_{ T}(G_n) = an)} \\
	& = -n(1-a) \log(1-a) - \tfrac13 an \log{n}- \tfrac13 an \log(\tfrac13a) 
	- \tfrac23 an -\tfrac13 an \log{6}+an\log{\lambda} +o(n^{19/20}).
	\end{aligned}
	\end{equation}
We write
\begin{equation*}
\mathbb{E}\left(\e^{\beta \log{n} V_{T}(G_n)}\right) = \sum_{q=0}^n \e^{\beta \log{n} q} \,
\mathbb{P}\left(V_{T}(G_n)=q\right).
\end{equation*}
For $p_n = \lambda/n$, using the estimate in \eqref{eqn:606pm05apr222}, we obtain 
\begin{equation}
\label{eqn:wednov22602pm}
\begin{aligned}
&\mathbb{E}\left(\e^{\beta \log{n} V_{T}(G_n)}\right) =  n\sup_{0\leq a\leq 1}\\
&\times \exp\left({\beta an\log{n} -n(1-a) \log(1-a) - \tfrac13 an \log(\tfrac13 an) 
- an (\tfrac23  +\tfrac13 \log{6}- \log{\lambda}) + o(n^{19/20})}\right).
\end{aligned}
\end{equation} 
Substitute $\beta= \tfrac{1}{3}+\frac{\theta}{\log{n}}$, to get
\begin{equation}
\mathbb{E}\left(\e^{\beta \log{n} V_{T}(G_n)}\right) 
= n \exp\left(n\max_{0\leq a\leq 1}\Lambda(a,\theta,\lambda) + o(n^{19/20})\right).
\end{equation}
\end{proof}

\begin{proof}[Proof of Theorem \ref{thm-linear-tilting-ERG}.] Fix an $\vep>0$. We wish to upper bound $\mathbb{P}_\beta(V_{T}(G_n) \not\in [(a^\star-\varepsilon)n, (a^\star+\varepsilon)n])$. This can be written as
\begin{equation}
\mathbb{P}_\beta\left(V_{T}(G_n) \not\in [(a^\star-\varepsilon)n, (a^\star+\varepsilon)n]\right) 
= \frac{1}{Z_n\big(\beta\big) }\sum_{q\not\in [(a^\star-\varepsilon)n, (a^\star+\varepsilon)n]} \mathrm{e}^{\beta \log{n} q} 
\,\, \mathbb{P}(V_{T}(G_n) =q).
\end{equation}
Define
\begin{equation}
\delta = \Lambda(a^\star,\theta,\lambda) - \max_{a\not\in [(a^\star-\varepsilon), (a^\star+\varepsilon)]} \Lambda(a,\theta,\lambda).
\end{equation}
It is easy to check that $\delta>0$, since $\Lambda(\cdot,\theta,\lambda)$ is strictly concave and therefore has a unique maximum. Now we set $\beta= \tfrac{1}{3}+\frac{\theta}{\log{n}}$ and use \eqref{eqn:606pm05apr222}, to obtain that $\log \sum_{q\not\in [(a^\star-\varepsilon)n, (a^\star+\varepsilon)n]} \mathrm{e}^{\beta \log{n} q} \,\, \mathbb{P}(V_{T}(G_n) =q)$ is bounded above by $n(\tfrac14\delta+ \max_{a\not\in [(a^\star-\varepsilon), (a^\star+\varepsilon)]} \Lambda(a,\theta,\lambda))$ for large enough $n$. Then use $\eqref{eqn:satoct71210pm}$ to get $\log Z_n(\tfrac{1}{3}+\tfrac{\theta}{\log{n}}) \geq n(-\tfrac14\delta+ \Lambda(a^\star,\theta,\lambda))$. Combining these estimates, we get
\begin{equation}
\log \mathbb{P}_{(\tfrac{1}{3}+\frac{\theta}{\log{n}})}\left(V_{T}(G_n) 
\not\in [(a^\star-\varepsilon)n, (a^\star+\varepsilon)n]\right) \leq - \tfrac12 n\delta.
\end{equation}

Let us next compute the following moment generating function for the number of edges under the measure $\mathbb{P}_\beta$ defined in \eqref{eqn:250pm15feb22}. For $t \in \mathbb{R}$,
\begin{equation}
\label{eqn:thunov9117pm}
\mathbb{E}_\beta\left[\e^{te(G_n)}\right]= \frac{1}{Z_n\big(\beta\big)} \mathbb{E}\left(\e^{te(G_n)+ \beta \log{n} V_{T}(G_n)}\right)
= \frac{\left(\phi_n(t)\right)^{{n\choose 2}}}{Z_n\big(\beta\big)} \mathbb{E}_{p_n(t)}\left(\e^{\beta \log{n} V_{T}(G_n))}\right),
\end{equation}
where $\mathbb{E}_{p_n(t)}$ is the expectation w.r.t.\ the \erdos{} with $p=p_n(t)$.
 
Now, using \eqref{eqn:satoct71210pm}, we get 
\begin{equation}
\label{eqn:wednov22629pm}
\begin{aligned}
&\left(\phi_n(t)\right)^{{n\choose 2}} \exp{\left(n\max_{0 \leq a \leq 1} \Lambda(a,\theta,np_n(t)) 
- n\max_{0 \leq a \leq 1} \Lambda(a,\theta,\lambda) +o(n^{19/20})\right)} \leq \mathbb{E}_\beta\left[\e^{te(G_n)}\right] \\
&\leq \left(\phi_n(t)\right)^{{n\choose 2}} \exp{\left(n\max_{0 \leq a \leq 1} \Lambda(a,\theta,np_n(t)) 
- n\max_{0 \leq a \leq 1} \Lambda(a,\theta,\lambda) +o(n^{19/20})\right)}.
\end{aligned}
\end{equation}
Finally, simplifying \eqref{eqn:wednov22629pm}, we obtain
$$
\lim_{n \rightarrow \infty} \frac{1}{n} \log{\mathbb{E}_\beta\left[\e^{te(G_n)}\right]} = \tfrac{1}{2}\lambda (\e^t-1) 
+ \max_{0 \leq a \leq 1} \Lambda(a,\theta+t,\lambda)-\max_{0 \leq a \leq 1} \Lambda(a,\theta,\lambda).
$$
Differentiating the right-hand side of the last display with respect to $t$ at $t=0$, we find $\lambda/2+a^{\star}$. (The first term is easy while the second term is handled in Lemma \ref{techlemma:weddec20200pm} below.) Therefore 
$$
\frac{E(G_n)}{n}\convpbeta \frac{\lambda}{2}+a^{\star}
$$ 
by Lemma \ref{lemma:tuedec20434pm}.
\end{proof}

%%%

\begin{lem}
\label{techlemma:weddec20200pm}

Let $\Lambda_{\max}(t)=\max_{0\leq a\leq 1}\Lambda(a,\theta+t,\lambda)$. Then
$$
\frac{\partial\Lambda_{\max}(t)}{\partial t}  \big |_{t=0} = a^{\star}.
$$
\end{lem}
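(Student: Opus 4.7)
The plan is to exploit the fact that $\theta$ enters $\Lambda$ linearly. Writing out the definition gives $\Lambda(a,\theta+t,\lambda) = ta + \Lambda(a,\theta,\lambda)$, so
\begin{equation*}
\Lambda_{\max}(t) = \max_{a \in [0,1]} \bigl[\, ta + \Lambda(a,\theta,\lambda)\,\bigr].
\end{equation*}
This exhibits $\Lambda_{\max}$ as the pointwise supremum of the affine functions $t \mapsto ta + \Lambda(a,\theta,\lambda)$ and is therefore convex in $t$, which makes one-sided derivatives automatic and reduces the claim to a standard envelope (Danskin) argument.

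Before invoking the envelope step I would record two structural facts about $a \mapsto \Lambda(a,\theta,\lambda)$. First, a direct calculation gives $\partial_a^2 \Lambda = -1/(1-a) - 1/(3a) < 0$ on $(0,1)$, so the map is strictly concave, and consequently its maximizer $a^\star$ is unique. Second, $\partial_a \Lambda \to +\infty$ as $a \downarrow 0$ and $\partial_a\Lambda \to -\infty$ as $a \uparrow 1$, so $a^\star \in (0,1)$. Both facts are used in the proof of Theorem \ref{thm-linear-tilting-ERG} to identify $a^\star$, so essentially nothing new is needed here.

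Now I would run the envelope sandwich. For any $t$, plugging the particular choice $a = a^\star$ into the max defining $\Lambda_{\max}(t)$ yields $\Lambda_{\max}(t) - \Lambda_{\max}(0) \geq ta^\star$. Conversely, writing $a^\star(t)$ for the (unique) maximizer of $a \mapsto ta + \Lambda(a,\theta,\lambda)$ and plugging it into $\Lambda_{\max}(0)$ as a suboptimal choice yields $\Lambda_{\max}(t) - \Lambda_{\max}(0) \leq ta^\star(t)$. Dividing by $t$ for $t>0$ and $t<0$ gives
\begin{equation*}
a^\star \wedge a^\star(t) \;\leq\; \frac{\Lambda_{\max}(t) - \Lambda_{\max}(0)}{t} \;\leq\; a^\star \vee a^\star(t).
\end{equation*}
It remains to show $a^\star(t) \to a^\star$ as $t \to 0$, which follows because $a^\star(t)$ solves the first-order condition $\partial_a \Lambda(a^\star(t),\theta,\lambda) = -t$ and $\partial_a \Lambda(\cdot,\theta,\lambda)$ is continuous and strictly decreasing on $(0,1)$ with range $\mathbb{R}$, so its inverse is continuous. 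Letting $t\to 0$ sandwiches the difference quotient between quantities tending to $a^\star$, which gives $\Lambda_{\max}'(0) = a^\star$.

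There is no real obstacle here; the only minor care is ensuring that the maximizer stays in the interior so that the first-order condition can be used to read off continuity of $a^\star(t)$ in $t$. Strict concavity of $\Lambda(\cdot,\theta,\lambda)$ combined with the boundary behaviour of its derivative handles this uniformly for $t$ in a neighbourhood of $0$.
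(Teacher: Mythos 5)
Your proof is correct, and it takes a genuinely different route from the paper's. You exploit the linearity of $\Lambda$ in $\theta$ to write $\Lambda_{\max}(t) = \max_a \bigl[ ta + \Lambda(a,\theta,\lambda)\bigr]$, recognize this as a pointwise supremum of affine functions (hence convex), and run a Danskin/envelope sandwich on the difference quotient, which only requires \emph{continuity} of the maximizer map $t\mapsto a^\star(t)$ near $t=0$; continuity in turn follows from strict monotonicity of $\partial_a\Lambda(\cdot,\theta,\lambda)$ and its blow-up at the endpoints of $(0,1)$. The paper instead writes $\Lambda_{\max}(t) = \Lambda(a^\star(t),\theta+t,\lambda)$, applies the chain rule, implicitly differentiates the first-order condition to obtain an explicit formula for $\partial a^\star(t)/\partial t$, and then observes that the term involving $\partial a^\star(t)/\partial t$ drops out because $\partial_a\Lambda$ vanishes at $a=a^\star(t)$. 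Both arrive at $\Lambda_{\max}'(0)=a^\star$. Your envelope argument is arguably more economical: it bypasses the explicit (and, in the end, unused) computation of $\partial a^\star(t)/\partial t$, and it needs only continuity rather than differentiability of $a^\star(t)$, so it would generalize more readily to settings where implicit differentiation is not available. The paper's chain-rule computation, on the other hand, has the side benefit of producing the formula $\partial a^\star(t)/\partial t = 3a^\star(t)(1-a^\star(t))/(1+2a^\star(t))$, which could be reused if higher-order expansions of $\Lambda_{\max}$ were needed. Either way, the constraint-qualification point you flag at the end — that $a^\star(t)$ stays in the open interval so the first-order condition is legitimate — is exactly the right thing to check, and your verification via the divergence of $\partial_a\Lambda$ at $0^+$ and $1^-$ is adequate.
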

\begin{proof}
Recall that 
$$
\Lambda(a,\theta,\lambda) = \theta a - (1-a) \log(1-a) - (\tfrac13 a) \log(\tfrac13 a)-\tfrac23 a -\tfrac13 a \log{6} + a\log{\lambda}.
$$
It is easy to see that $\Lambda$ is concave in $a$ and thus has a unique maximum. Define 
$$
a^\star(t) = \arg\max_{0\leq a \leq 1} \Lambda(a,\theta+t,\lambda).
$$ 
Also note that $a^\star(t)$ must be the unique solution of the equation $\frac{\partial \Lambda}{\partial a}=0$. More precisely,
\begin{equation}
\label{eqn:weddec201227pm}
\theta+t+\log(1-a^\star(t))-\tfrac{1}{3}\log(\tfrac{1}{3}a^\star(t))-\tfrac13 \log{6} + \log{\lambda}=0.
\end{equation}
Clearly $a^\star(0)=a^\star$, and 
\begin{equation}
\label{eqn:weddec201247pm}
\Lambda_{\max}(t)=\max_{0\leq a\leq 1}\Lambda(a,\theta+t,\lambda)= \Lambda(a^\star(t),\theta+t,\lambda).
\end{equation}
We can implicitly differentiate $a^{\star}(t)$ with respect to $t$ and use \eqref{eqn:weddec201227pm}, to obtain
\begin{equation}
\frac{\partial a^{\star}(t)}{\partial t} = \frac{3a^{\star}(t)(1-a^{\star})}{1+2a^{\star}}.
\end{equation}
We are now ready to evaluate $\frac{\partial\Lambda_{\max}(t)}{\partial t}$. Using \eqref{eqn:weddec201247pm}, we get 
\begin{equation}
\label{eqn:weddec201255}
\begin{aligned}
\frac{\partial\Lambda_{\max}(t)}{\partial t} 
&= \frac{\partial\Lambda(a,\theta+t,\lambda)}{\partial a}\Big|_{a=a^\star(t)} 
\times \frac{\partial a^{\star}(t)}{\partial t} +\frac{\partial\Lambda(a,\theta+t,\lambda)}{\partial \theta}\Big|_{a=a^\star(t)}\\
&= \Big(\theta+t +\log(1-a^\star(t))-\tfrac{1}{3}\log(\tfrac{1}{3}a^\star(t))-\tfrac13 \log{6} + \log{\lambda}\Big)
\frac{3a^{\star}(t)(1-a^{\star}(t))}{1+2a^{\star}(t)}+a^{\star}(t).
\end{aligned}
\end{equation}
Therefore \eqref{eqn:weddec201255} yields $\frac{\partial\Lambda_{\max}(t)}{\partial t}  \big |_{t=0} = a^{\star}(0)=a^{\star}$.
\end{proof}

%%%

\subsection{Functional tilting}
\label{sec:exptriangles2}
The following lemma identifies the asymptotics of the partition sum:
 
\begin{lem}[Partition function for the functional tilting]
\label{prop:thumar301259pm}
Let $g\colon\,[0,1]\rightarrow \mathbb{R}$ be a function such that all maxima of $a\mapsto g(a)-\tfrac13 a$ are attained in $(0,1)$. Let $a^\star \in (0,1)$ be any of the maximizers. Then
	\begin{equation}
	\label{universality:ergm-alt}
	\lim_{n\rightarrow \infty} \frac{1}{n\log{n}}\log{Z_n(g)} = g(a^\star)-\tfrac13 a^\star,
	\end{equation}
and $V_{ T}(G_n)/n\convpg a^\star$. 
\end{lem}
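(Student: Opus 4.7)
The plan is to apply Laplace's method to the partition function, using Theorem \ref{thm:ldp_vertices_in_triangle} as the input that quantifies $\mathbb{P}(V_T(G_n)=q)$ on the $n\log n$ scale. First I would write
\begin{equation*}
Z_n(g) = \mathbb{E}\big[\e^{n\log n\cdot g(V_T(G_n)/n)}\big] = \sum_{q=0}^{n} \e^{n\log n\cdot g(q/n)}\,\mathbb{P}(V_T(G_n)=q),
\end{equation*}
and, expanding $-(\tfrac13 q)\log(\tfrac13 q) = -\tfrac13 q\log n - \tfrac13 q\log(q/(3n))$ inside the estimate of Theorem \ref{thm:ldp_vertices_in_triangle} (together with the Remark at the end of Section \ref{sec:lb} that justifies the same estimate for $\mathbb{P}(V_T(G_n)=q)$), I would extract the $n\log n$ scale as
\begin{equation*}
\log\mathbb{P}(V_T(G_n)=q) = -\tfrac13 q\log n + n\,R(q/n,\lambda) + o(n^{19/20}),
\end{equation*}
where $R(a,\lambda) = -(1-a)\log(1-a) - \tfrac13 a\log(a/3) - a(\tfrac23+\tfrac13\log 6) + a\log\lambda$ is bounded and continuous on $[0,1]$ (the endpoints are fine since $x\log x\to 0$ as $x\to 0^+$).

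Combining these two relations gives
\begin{equation*}
\log Z_n(g) = \log\sum_{q=0}^{n}\exp\!\Big[n\log n\cdot\big(g(q/n)-\tfrac13(q/n)\big) + nR(q/n,\lambda) + o(n^{19/20})\Big].
\end{equation*}
The upper bound follows from bounding the sum by $(n+1)$ times its maximum term, whose exponent is at most $n\log n\cdot\max_{a\in[0,1]}[g(a)-\tfrac13 a] + O(n) + o(n^{19/20})$. For the lower bound, I would pick $q_n=\lfloor a^\star n\rfloor$ and keep just that single term; since $a^\star\in(0,1)$ and $R(\cdot,\lambda)$ is bounded, the resulting lower bound matches the upper bound after division by $n\log n$ (this is where I need $g$ to be at least continuous at $a^\star$, which I will assume as is standard in this type of analysis). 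Together these yield $\frac{1}{n\log n}\log Z_n(g)\to g(a^\star)-\tfrac13 a^\star$.

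For the concentration statement $V_T(G_n)/n\convpg a^\star$ (under uniqueness of the maximizer, as in Theorem \ref{thm-func-tilting-ERG}), the same decomposition gives, for any $\varepsilon>0$,
\begin{equation*}
\mathbb{P}_g\big(|V_T(G_n)/n-a^\star|>\varepsilon\big) \leq \frac{(n+1)\,\exp\!\big[n\log n\cdot M_\varepsilon + O(n) + o(n^{19/20})\big]}{\exp\!\big[n\log n\,(g(a^\star)-\tfrac13 a^\star) + O(n) + o(n^{19/20})\big]},
\end{equation*}
with $M_\varepsilon=\sup_{|a-a^\star|>\varepsilon}[g(a)-\tfrac13 a]$. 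By uniqueness (and continuity) of the maximizer, $\delta:= g(a^\star)-\tfrac13 a^\star - M_\varepsilon > 0$, so the right-hand side decays like $\e^{-\delta n\log n + O(n)}\to 0$.

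The main obstacle will be the uniformity of the $o(n^{19/20})$ error term in $q$ across $\{0,1,\dots,n\}$, since Theorem \ref{thm:ldp_vertices_in_triangle} is stated pointwise; this needs to be verified by revisiting the proofs of the upper and lower bounds there and checking that their constants (notably the $C$ in Lemma \ref{lem:thudec1402pm}) can be chosen independently of the target value $q$. A secondary subtle point is that Theorem \ref{thm:ldp_vertices_in_triangle} controls $\mathbb{P}(V_T(G_n)\geq q)$, so I would invoke the Remark at the end of Section \ref{sec:lb} to transfer the bound to $\mathbb{P}(V_T(G_n)=q)$; beyond that, everything reduces to a clean Laplace argument.
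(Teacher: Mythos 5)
Your argument is essentially the same as the paper's: a Laplace-method calculation on the scale $n\log n$ using the second-order estimate of Theorem \ref{thm:ldp_vertices_in_triangle} (in the form of \eqref{eqn:thudec221128pm}) as input, bounding the sum above by $n+1$ times its largest term and below by the single term near $q\approx a^\star n$. The two issues you flag are legitimate but resolve in your favour: the constant $C$ in Lemma \ref{lem:thudec1402pm} and the $O(n)$ correction in the upper bound are indeed uniform in $q\in\{0,\dots,n\}$ (the key observation being that $q\log(n/q)\le n/\e$ for all $q\le n$), and continuity of $g$ at $a^\star$ is an implicit hypothesis in the paper as well, needed both for the rounding $q_n=\lfloor a^\star n\rfloor$ in the lower bound and for the positivity of the gap $\delta$ in the concentration step.
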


\begin{proof}
Using \eqref{eqn:thudec221128pm}, we get 
	\begin{equation}
	\label{eqn:314pm15mar22}
	\begin{aligned}
	\mathbb{E}\left(\mathrm{e}^{n\log{n}\,g(\frac{V_{ T}(G_n)}{n})}\right) 
	&= \sum_{q=0}^n \mathrm{e}^{n\log{n} g(\frac{q}{n})}\exp\big(-\tfrac13 q\log{n} +O(n)\big)\\
	&\leq n \exp\Big(n\log{n}\,\sup_{a\in [0,1]} (g(a)-\tfrac13 a) +O(n)\Big).
	\end{aligned}
	\end{equation}
By assumption, the supremum is attained at $a^\star \in(0,1)$. Therefore we obtain the following lower bound as well:
	\begin{equation}
	\label{eqn:thumar301243pm}
	\mathbb{E}\left(\mathrm{e}^{n\log{n} g(\frac{V_{ T}(G_n)}{n})}\right) 
	\geq \exp\Big(n\log{n}\,(g(a^\star)-\tfrac13 a^\star) +O(n)\Big).
	\end{equation}
Also, the main contribution comes from $V_{ T}(G_n)\in [(a^\star-\varepsilon)n, (a^\star+\varepsilon)n]$. Combining \eqref{eqn:314pm15mar22}--\eqref{eqn:thumar301243pm}, we get the claim.
\end{proof}

The limit of the number of edges is slightly more involved. The next lemma identifies its moment generating function, as in Lemma \ref{lem:tuedec201245pm}:

\begin{lem}[Edge moment generating function for functional tilting]
\label{lem-edge-MGF-functional}
Let $g\colon\,[0,1]\rightarrow \mathbb{R}$ be any function such that $a\mapsto g(a)-\tfrac13 a$ is uniquely maximized at $a^\star \in (0,1)$. Let $\mathbb{E}_g$ be the expectation with respect to $\mathbb{P}_{g}$ defined in \eqref{eqn:250pm30mar22}. Then, for any $t \in \mathbb{R}$, 
	\begin{equation}
	\lim_{n\rightarrow \infty}\frac{1}{n}\log{\mathbb{E}_g\left[\e^{te(G_n)}\right]} = \tfrac{1}{2}\lambda (\e^t-1)+a^\star t.
	\end{equation}
\end{lem}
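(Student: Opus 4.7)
The plan is to mirror the linear-tilting argument used in the proof of Theorem~\ref{thm-linear-tilting-ERG}, with the partition function asymptotics of Lemma~\ref{prop:thumar301259pm} playing the same role as \eqref{eqn:satoct71210pm} did there (suitably sharpened). First, I would apply the standard exponential tilt identity (already used in \eqref{eqn:tuedev201115am}) to write
\begin{equation*}
\mathbb{E}_g\big[\e^{t e(G_n)}\big]
= \frac{\phi_n(t)^{\binom{n}{2}}}{Z_n(g)}\,\mathbb{E}_{p_n(t)}\big[\e^{n\log n\,g(V_T(G_n)/n)}\big],
\end{equation*}
where $\phi_n(t)=1-p_n+\e^t p_n$ and $p_n(t)=\e^t p_n/\phi_n(t)$. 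Since $p_n=\lambda/n$, an elementary Taylor expansion gives $\binom{n}{2}\log\phi_n(t) = \tfrac12 n\lambda(\e^t-1)+o(n)$ and $np_n(t) = \e^t\lambda+O(1/n)$, so the expectation on the right is of the same form as $Z_n(g)$ but for an \erdos{} with effective intensity $\e^t\lambda$.

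The heart of the argument is then to refine Lemma~\ref{prop:thumar301259pm} by one order, keeping track of the $O(n)$ correction and, crucially, its $\lambda$-dependence. To this end I would use the full expansion of Theorem~\ref{thm:ldp_vertices_in_triangle}, setting
\begin{equation*}
h(a,\lambda) := -(1-a)\log(1-a) - \tfrac{a}{3}\log\tfrac{a}{3} - a\big(\tfrac{2}{3}+\tfrac{1}{3}\log 6\big) + a\log\lambda,
\end{equation*}
so that $\log\mathbb{P}(V_T(G_n)=an) = -\tfrac13 a n\log n + n\, h(a,\lambda)+o(n^{19/20})$. Substituting into $Z_n(g)=\sum_q \e^{n\log n\,g(q/n)}\mathbb{P}(V_T(G_n)=q)$, the log of the summand at $q=an$ becomes $n\log n\,(g(a)-\tfrac13 a) + n\, h(a,\lambda)+o(n^{19/20})$. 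The uniqueness of the maximizer $a^\star\in(0,1)$ of $a\mapsto g(a)-\tfrac13 a$ (combined with the concentration $V_T(G_n)/n\convpg a^\star$ from Lemma~\ref{prop:thumar301259pm}, which applies verbatim to $\mathbb{P}_{p_n(t)}$ because the variational problem does not involve $\lambda$) localizes both sums in a small window around $a^\star$, yielding
\begin{equation*}
\log Z_n(g) = n\log n\,\big(g(a^\star)-\tfrac13 a^\star\big) + n\, h(a^\star,\lambda)+o(n),
\end{equation*}
and likewise with $\lambda$ replaced by $\e^t\lambda$ for the numerator.

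Putting everything together, the $n\log n$ pieces cancel (this is where it matters that $a^\star$ is independent of $\lambda$), and since $h(a,\mu)$ depends on $\mu$ only through the linear term $a\log\mu$, one gets
\begin{equation*}
\tfrac1n\log\mathbb{E}_g\big[\e^{t e(G_n)}\big] = \tfrac12\lambda(\e^t-1) + \big[h(a^\star,\e^t\lambda)-h(a^\star,\lambda)\big] + o(1) = \tfrac12\lambda(\e^t-1) + a^\star t + o(1),
\end{equation*}
which is the desired identity. The main obstacle is the refinement in the middle step: Lemma~\ref{prop:thumar301259pm} is only accurate to $o(n\log n)$, so I would need to redo its Laplace-type argument at the finer $O(n)$ scale. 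The precision $o(n^{19/20})$ provided by Theorem~\ref{thm:ldp_vertices_in_triangle} is exactly what is needed, once the sums are cut off to a window $V_T(G_n)\in[(a^\star-\varepsilon)n,(a^\star+\varepsilon)n]$ (on which $h(\cdot,\lambda)$ is uniformly continuous) and the contribution outside is discarded using the uniqueness of the maximizer, exactly as was done in the proof of Theorem~\ref{thm-linear-tilting-ERG}.
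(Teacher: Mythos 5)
Your proposal is correct and follows essentially the same route as the paper: the exponential tilt identity, the second-order estimate from Theorem~\ref{thm:ldp_vertices_in_triangle} applied with effective intensity $\e^t\lambda$, and a localization around $a^\star$ that works precisely because $a^\star$ does not depend on $\lambda$. The one step you flag as the ``main obstacle'' (upgrading Lemma~\ref{prop:thumar301259pm} from $o(n\log n)$ to $O(n)$ precision by restricting to a window around $a^\star$) is exactly what the paper formalizes as Lemma~\ref{lem:may31121am}, and your sketch of how to handle it matches that lemma.
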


\begin{proof} We wish to compute 
	\begin{equation}
	\mathbb{E}_g\left[\e^{te(G_n)}\right] = \frac{1}{Z_n(g)}\, 
	\mathbb{E}\left(\exp{\big(te(G_n)+ n\log{n}\, g(\tfrac{V_{ T}(G_n)}{n})\big)}\right). 
	\end{equation}
This relation can be rewritten as 
	\begin{equation}
	\mathbb{E}_g\left[\e^{te(G_n)}\right] = \left(\phi_n(t)\right)^{{n\choose 2}} \frac{1}{Z_n(g)}\, 
	\mathbb{E}_{p_n(t)}\left(\exp{\big( n\log{n}\, g(\tfrac{V_{ T}(G_n)}{n})\big)}\right). 
	\end{equation}
Write
	\begin{equation}
	\mathbb{E}_{p_n(t)}\left(\exp{\big(n\log{n}\, g(\tfrac{V_{ T}(G_n)}{n})\big)}\right) 
	= \sum_{q=0}^n \mathrm{e}^{n\log{n} g\left(\frac{q}{n}\right)}\, \mathbb{P}_{p_n(t)}(V_{T}(G_n)=q).
	\end{equation}
Using the estimate in \eqref{eqn:monfeb21427pm}, we obtain that the sum is bounded above by
	\begin{equation}
	\begin{aligned}
	&n \sup_{0\leq a \leq 1}\exp\Big(n\log{n}\,(g(a)- \tfrac13a) -n(1-a) \log(1-a) - (\tfrac13an)\log(\tfrac13a)\\
	&\qquad\qquad\qquad -\tfrac23 an -\tfrac13 an\log{6}+an\log{(\e^t \lambda)} +o(n^{19/20})\Big).
	\end{aligned}
	\end{equation}
	
In order to study this function, we need the following technical lemma:

\begin{lem}
\label{lem:may31121am}
Let $f_1\colon\,[0,1]\rightarrow \mathbb{R}$ and $f_2\colon\,[0,1]\rightarrow \mathbb{R}$ be two functions such that $f_1$ uniquely attains its maximum at $a^\star \in [0,1]$ and $f_1, f_2$ are continuous on $[0,1]$. Let $(a_n)_{n\in\mathbb{N}}$,  $(b_n)_{n\in\mathbb{N}}$ be sequences of non-negative real numbers such that $b_n/a_n \rightarrow 0$ as $n\to \infty$. Then, for every fixed $\varepsilon>0$, $a_nf_1(a)+b_nf_2(a) \leq a_nf_1(a^\star)+b_nf_2(a^\star) +\varepsilon b_n$ for sufficiently large $n$ and all $a\in(0,1)$.
\end{lem}

\begin{proof} 
Fix a $\delta>0$. First consider the situation when $|a-a^\star|\geq \delta$. Let the minimum value of $f_1(a^\star) - f_1(a)$ on the compact set $\{|a-a^\star|\geq \delta\} \cap [0,1]$ be $D>0$ ($D$ must be positive because $f_1$ has a unique maximum), and the minimum value of $f_2(a^\star) - f_2(a)$ on $[0,1]$ must be $M$. Then
	\begin{equation}
	\begin{aligned}
 	a_nf_1(a^\star)+b_nf_2(a^\star) - a_nf_1(a)-b_nf_2(a) 
 	&= a_n\Big(f_1(a^\star) - f_1(a) + \frac{b_n}{a_n}(f_2(a^\star) - f_2(a))\Big)\\
 	&\geq a_n\Big(D + \frac{b_n}{a_n}M\Big) \geq 0
	\end{aligned}
	\end{equation}
for large enough $n$. If $|a-a^\star|< \delta$, then we use $f_1(a) \leq f_1(a^\star)$ and continuity of $f_2$, and choose $\delta$ small enough. 
\end{proof}

Since $a\mapsto g(a)-\tfrac13 a$ is uniquely maximized at $a^\star$, we can use Lemma \ref{lem:may31121am} with $a_n=n\log{n}$ and $b_n=n$ to obtain the following upper bound, valid for large enough $n$:
	\begin{equation}
	\label{e:form5}
	\begin{aligned}
	&n \exp\Big(n\log{n}\,(g(a^\star)- \tfrac13a^\star +\varepsilon) - n(1-a^\star) \log(1-a^\star) 
	- (\tfrac13 a^\star n) \log(\tfrac13a^\star n)\\
	&\qquad \qquad -\tfrac23 a^\star n -\tfrac13a^\star n\log{6}+a^\star n\log{(\e^t \lambda)} +o(n)\Big).
	\end{aligned}
	\end{equation}
Obtaining a lower bound is easy, namely,
	\begin{equation}
	\mathbb{E}_{p_n(t)}\left[\e^{n\log{n}\,g(\tfrac{V_{ T}(G_n)}{n})}\right]
	\geq  \mathrm{e}^{n\log{n}\, g\left(a^\star\right)}\,\mathbb{P}_{p_n(t)}(V_{T}(G_n)=a^\star n).
	\end{equation}
We use the estimates in Theorem~\ref{thm:ldp_vertices_in_triangle} to obtain the following lower bound for the exponential in \eqref{e:form5}:
	\begin{equation}
	\begin{aligned}
	&\exp\Big(n\log{n}\,(g(a^\star)- \tfrac13a^\star) -n(1-a^\star) \log(1-a^\star) - (\tfrac13a^\star)n\log(\tfrac13a^\star)\\ 
	&\qquad\qquad  - \tfrac23 a^\star n - \tfrac13 a^\star n \log{6} + a^\star n\log{\lambda} +o(n)\Big).
	\end{aligned}
	\end{equation}
Therefore we obtain the sandwich 
	\begin{equation}
	\begin{aligned}
	\frac{1}{n} \left(\phi_n(t)\right)^{{n\choose 2}}\left(\frac{p_n(t)}{p_n}\right)^{a^\star n} \e^{-\varepsilon n}
	&\leq \frac{1}{Z_n(g)}\, \mathbb{E}_{p_n(t)}\left(\exp{\big(n\log{n}\, g(\tfrac{V_{ T}(G_n)}{n})\big)}\right)\\ 
	&\leq n \left(\phi_n(t)\right)^{{n\choose 2}} 
	\left(\frac{p_n(t)}{p_n}\right)^{a^\star n} \e^{\varepsilon n},
	\end{aligned}
	\end{equation}
from which the claim follows.
\end{proof}

%%%%%% SECTION 4 %%%%%%%%%%%%%%%%%%%%%%%%%%%%%%%%%%%%

\section{Consistent parameter estimation in exponential random graphs}
\label{sec-consistent-estimation-ERGs}

In practice, we can only observe a large network without knowing its full architecture. From the modeling perspective it is important to be able to estimate unknown parameter(s) from observations. In this section, we show that it is possible to consistently estimate the parameters in the exponential random graph in \eqref{eqn:250pm15feb22}, with $\beta=\tfrac13+\frac{\theta}{\log n}$. Note that the distribution of the graph is characterized by two parameters: $\theta$ and $\lambda$. The estimation procedure is a by product of Theorem \ref{thm-linear-tilting-ERG}, and is stated in the following theorem:

\begin{thm}
Let $G_n$ be an observation from the exponential random graph in \eqref{eqn:250pm15feb22}, with $\beta=\tfrac13+\frac{\theta}{\log n}$. Define 
$$
\hat{\lambda}_n = \frac{2E(G_n)}{n} -\frac{2V_T(G_n)}{n}
$$ 
and 
$$
\hat{\theta}_n = \{x\colon\, \arg\max_{0\leq a\leq 1}\Lambda(a,x,\hat{\lambda}_n)= V_T(G_n)/n\}.
$$
Then $\hat{\lambda}_n \convpbeta \lambda$ and $\hat{\theta}_n \convpbeta \theta$ as $n \to \infty$.
\end{thm}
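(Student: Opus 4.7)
The plan is to reduce everything to Theorem~\ref{thm-linear-tilting-ERG} together with an explicit inversion of the first-order optimality condition for $\Lambda(\cdot,x,\mu)$. The estimator $\hat\lambda_n$ is an affine functional of the two sufficient statistics, so its limit will follow immediately from the continuous mapping theorem applied to the convergences $V_T(G_n)/n \convpbeta a^\star$ and $E(G_n)/n \convpbeta \lambda/2 + a^\star$ given in Theorem~\ref{thm-linear-tilting-ERG}: indeed
\begin{equation}
\hat{\lambda}_n = \frac{2E(G_n)}{n} - \frac{2V_T(G_n)}{n} \;\convpbeta\; 2\Big(\tfrac{\lambda}{2}+a^\star\Big) - 2a^\star = \lambda,
\end{equation}
and this will dispose of the first claim.

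For $\hat\theta_n$ the plan is to show that its implicit definition can be made completely explicit. For each fixed $(x,\mu)$ with $\mu > 0$, the function $a \mapsto \Lambda(a,x,\mu)$ is strictly concave on $(0,1)$, so it has a unique interior maximizer characterized by $\partial_a\Lambda = 0$. A direct differentiation of the formula for $\Lambda$ yields
\begin{equation}
\partial_a \Lambda(a,x,\mu) = x + \log(1-a) - \tfrac{1}{3}\log(a/3) - \tfrac{1}{3}\log 6 + \log\mu,
\end{equation}
which is \emph{linear} in $x$. Solving for $x$ in terms of the prescribed maximizer $a$ gives the explicit inverse $x = \tfrac{1}{3}\log(2a) - \log(1-a) - \log\mu$. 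Plugging $(a,\mu) = (V_T(G_n)/n, \hat\lambda_n)$ into this expression therefore produces the closed-form representation
\begin{equation}
\hat\theta_n = \tfrac{1}{3}\log\!\big(2V_T(G_n)/n\big) - \log\!\big(1-V_T(G_n)/n\big) - \log\hat\lambda_n.
\end{equation}

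I would then conclude by the continuous mapping theorem. Since $a^\star \in (0,1)$ and $\lambda > 0$, the map $(a,\mu) \mapsto \tfrac{1}{3}\log(2a) - \log(1-a) - \log\mu$ is continuous at $(a^\star,\lambda)$, so combining $V_T(G_n)/n \convpbeta a^\star$ with the first step yields
\begin{equation}
\hat\theta_n \;\convpbeta\; \tfrac{1}{3}\log(2a^\star) - \log(1-a^\star) - \log\lambda = \theta,
\end{equation}
where the last equality is exactly the first-order condition $\partial_a\Lambda(a^\star,\theta,\lambda) = 0$ that defines $a^\star$ in Theorem~\ref{thm-linear-tilting-ERG}.

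The only technical point to flag is the well-definedness of the inverse map with high probability: one needs $V_T(G_n)/n \in (0,1)$ and $\hat\lambda_n > 0$ in order for the logarithms to make sense. Both are automatic, because $a^\star$ lies in the open interval $(0,1)$ and $\lambda > 0$, and both convergences hold in probability, so the estimators lie in the domain of continuity with probability tending to one. No additional nonlinear large deviation analysis is needed; all the difficult input has already been provided by Theorem~\ref{thm-linear-tilting-ERG}.
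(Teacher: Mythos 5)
Your proof is correct and takes essentially the same approach as the paper, whose entire argument is ``The proof follows from \eqref{eqn:wed20303pm}--\eqref{eqn:wed20304pm} and the continuous mapping theorem.'' Your explicit inversion of the first-order condition $\partial_a\Lambda(a,x,\mu)=0$ to obtain the closed form $\hat\theta_n=\tfrac13\log(2V_T(G_n)/n)-\log(1-V_T(G_n)/n)-\log\hat\lambda_n$, together with your check that the relevant map is continuous at $(a^\star,\lambda)$ and that the estimator is well-defined with probability tending to one, is a welcome elaboration of the step the paper leaves implicit; your differentiation of $\Lambda$ and the resulting formula agree with \eqref{eqn:weddec201227pm}.
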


\begin{proof}
The proof follows from \eqref{eqn:wed20303pm}--\eqref{eqn:wed20304pm} and the continuous mapping theorem.
\end{proof}

\begin{rmk}
{\rm It is not hard to come up with models where consistent estimation is possible via Theorem \ref{thm-func-tilting-ERG}. As a proof of concept, let us consider the example in Corollary \ref{cor:weddec20308pm}. In this case, assume that $\alpha$ is known, and $\beta$ and $\lambda$ are unknown parameters with the restriction $3\alpha\beta >1$. In the notation of Theorem \ref{thm-func-tilting-ERG}, 
$$
a^\star = \left(\frac{1}{3\alpha \beta}\right)^{\tfrac{1}{1-\alpha}}.
$$ 
Therefore a natural proposal for estimators of $\beta$ and $\lambda$ is the solution of the equations
\begin{equation}
\left(\frac{1}{3\alpha \hat{\beta}_n}\right)^{\tfrac{1}{1-\alpha}} = \frac{V_T(G_n)}{n}, \qquad
\left(\frac{1}{3\alpha \hat{\beta}_n}\right)^{\tfrac{1}{1-\alpha}} + \frac{\hat{\lambda}_n}{2} =\frac{2E(G_n)}{n}.
\end{equation}
Using \eqref{eqn:weddec20332pm}--\eqref{eqn:weddec20333pm} and the continuous mapping theorem, we have $\hat{\beta}_n \to \beta$ and $\hat{\lambda}_n \to \lambda$ as $n\to \infty$ under the measure described in Corollary \ref{cor:weddec20308pm}.}
\end{rmk}

%%%%%%%%%%%%% SECTION 5 %%%%%%%%%%%%%%%%%%%%%%%%%%%%%%%%%%

\section{Discussion and open problems}
\label{sec-disc}

In Section~\ref{sec:disc} we discuss our main results, in Section~\ref{sec:prob} we list some open problems.
\medskip 

%%%

\subsection{Discussion main results}
\label{sec:disc}
 
It is crucial that our main technical theorem, Theorem \ref{thm:ldp_vertices_in_triangle}, identifies the {\em second-order asymptotics} of the log of the large deviation probability that $V_T(G_n)\geq an$. The first-order asymptotics, of order $n\log n $, was already identified in \cite{ChaHofHol21}. The fact that we can also identify the second-order asymptotics of order $n$ allows us to prove a large deviation principle for the number of edges in the graph (in Theorem \ref{thm-conc-edges}), as well as prove that most triangles are actually vertex disjoint (in Theorem \ref{structure:disjoint triangles}), which would not have been possible with the first-order result only. This is a reflection of the fact that the key large deviation rate is $n$, not $n\log n$, as one might have conjectured after \cite{ChaHofHol21}. 

The above results in turn allowed us to suggest a range of {\em sparse exponential random graph models}, which is important because it is hard to identify sparse exponential random graph models with many triangles. Finally, our results allowed us to prove that the parameters of the model can be consistently estimated, a property that is relevant in practice.

%%%

\subsection{Open problems}
\label{sec:prob}
 
Several natural and interesting extensions are possible. The large deviation principle for the number of edges in the graph in Theorem \ref{thm-conc-edges} suggests that also a {\em central limit theorem} should also hold. This is further exemplified by the nice limiting generating function for the number of edges in Lemma \ref{lem:tuedec201245pm}. Unfortunately, while being suggestive, it seems hard to turn this observation into a mathematical proof. Indeed, typically some correlation-type inequality is needed for such proofs, which we do not have at our disposal here.
 
Furthermore, the combination of Theorem \ref{structure:disjoint triangles} (describing that most triangles are disjoint) and Theorem \ref{thm-conc-edges} (describing the convergence of the number of edges) suggests that we might be able to identify the {\em local limit} of the model as well. Indeed, we conjecture that the local limit is exactly the same as that of the model where $\tfrac13 an$ disjoint triangles are randomly dropped inside an \erdos. Unfortunately, it seems difficult to prove such a result. Since the degree distribution is uniformly integrable, the model is {\em tight} in the space of rooted graphs in the local topology, but we do not see how to prove that the limit indicated above really is the local limit.

The type of {\em models} we investigated may be extended as well. We focussed on the number of vertices in triangles, but it would be natural to consider the number of {\em edges in triangles} instead. Since this number can vary much more (the number is at most $n(n-1)/2$ rather than $n$, as for the number of vertices in triangles), it appears to be a significantly more difficult problem. Finally, of course, we could extend the number of parameters in our model, and investigate the behaviour of the associated exponential random graph. In what generality can the parameters still be consistently estimated?

\begin{funding}
The work in this paper was supported by the Netherlands Organisation for Scientific Research (NWO) through Gravitation-grant NETWORKS-024.002.003. {The authors thank the referees for their careful reading of the paper and for suggesting a shorter proof of the lower bound presented in Section \ref{sec:lb}.}
\end{funding}

%%%%%%% REFERENCES %%%%%%%%%%%%%%%%%%%%%%%%%

\bibliographystyle{imsart-nameyear}
\bibliography{references.bib}

\end{document}